\newtheorem{theorem}{Theorem}
\newtheorem{lemma}[theorem]{Lemma}
\newtheorem*{thmA}{Theorem A}
\newtheorem*{thmB}{Theorem B}
\newtheorem{proposition}[theorem]{Proposition}
\newtheorem{qst}[theorem]{Question}
\newtheorem{corollary}[theorem]{Corollary}
\newtheorem{conjecture}[theorem]{Conjecture}
\DeclareMathOperator*{\Psl}{PSL}
\DeclareMathOperator*{\Sz}{Sz}
\def\d{\delta^*}
\def\e{\varepsilon}
\def\g{\gamma^*}
\def\gi{\gamma_{\infty}}
\begin{document} 

\author[Bastos]{ Raimundo Bastos}
\address{ Department of Mathematics, University of Brasilia,
Brasilia-DF, 70910-900 Brazil }
\email{bastos@mat.unb.br}
\author[Monetta]{ Carmine Monetta }
\address{Dipartimento di Matematica, Universit\`a di Salerno, Via Giovanni Paolo II, 132 - 84084 - Fisciano (SA), Italy}
\email{cmonetta@unisa.it}

\keywords{Finite groups, coprime commutators, nilpotency}
\subjclass[2010]{20D30, 20D25}

\title[Coprime commutators]{Coprime commutators in finite groups}

\maketitle

\begin{abstract}
Let $G$ be a finite group and let $k \geq 2$. 
We prove that the coprime subgroup $\g_k(G)$ is nilpotent if and only if $|xy|=|x||y|$ for any $\g_k$-commutators $x,y \in G$ of coprime orders (Theorem A). Moreover, we show that the coprime subgroup $\d_k(G)$ is nilpotent if and only if $|ab|=|a||b|$ for any powers of $\d_k$-commutators $a,b\in G$ of coprime orders (Theorem B).

\end{abstract}

\section{Introduction}

In finite Group Theory, the nilpotency of a group can be analyzed looking at the behavior of its elements of coprime orders. Indeed, in \cite{baubau} B.\,Baumslag and J.\,Wiegold proved the following result.

\begin{theorem}[\cite{baubau}]\label{teobau} Let $G$ be a finite group in which $|ab|=|a||b|$ whenever the elements $a,b$ have coprime orders. Then $G$ is nilpotent.
\end{theorem}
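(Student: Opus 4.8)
The plan is to deduce the theorem from the structure of minimal non-nilpotent groups. The crucial preliminary observation is that the hypothesis is inherited by every subgroup $H\le G$: if $a,b\in H$ have coprime orders, these orders are the same whether computed in $H$ or in $G$, so $|ab|=|a||b|$ holds in $H$ as well. Moreover, writing any two coprime-order elements as products of their (mutually coprime) prime-power components, which are themselves powers of the original elements, one checks that it suffices to verify the multiplicative condition for pairs $x,y$ whose orders are powers of two distinct primes $p\neq q$. Since a finite group is nilpotent precisely when it contains no minimal non-nilpotent subgroup (Schmidt group), and such subgroups inherit the hypothesis, I may assume for contradiction that $G$ itself is a Schmidt group. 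Thus $G=P\rtimes\langle y\rangle$, where $P$ is a normal Sylow $p$-subgroup, $\langle y\rangle$ is a cyclic Sylow $q$-subgroup of order $q^{b}$, $C_P(y)=\Phi(P)\le Z(G)$, and $y$ acts fixed-point-freely and irreducibly on $P/\Phi(P)$, with $[P,y]=P$.

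The engine of the argument is a decomposition of the products $xy$ with $x\in P$. Writing $xy=w_pw_q$ for its commuting $p$- and $q$-parts, the image of $w_q$ in $G/P\cong\langle y\rangle$ is $y$; since $\langle w_q\rangle$ is then a complement to $P$, the conjugacy part of the Schur--Zassenhaus theorem gives $w_q=y^{g}$ for some $g\in G$. Consequently $w_p\in C_P(w_q)=C_P(y)^{g}=\Phi(P)$, because $\Phi(P)$ is normal in $G$. Hence $|xy|=|w_p|\,q^{b}$ with $w_p\in\Phi(P)$. When $P$ is elementary abelian we have $\Phi(P)=1$, so $w_p=1$ and $xy$ is a $q$-element; taking any $x\neq1$ we obtain $|xy|=q^{b}\neq|x|\,q^{b}=|x||y|$, contradicting the hypothesis. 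This already settles the case of abelian $P$, since a Schmidt group with abelian normal Sylow subgroup necessarily has $P$ elementary abelian (otherwise $y$ would act nontrivially on the proper characteristic subgroup $\Phi(P)$, producing a smaller non-nilpotent subgroup).

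For nonabelian $P$ the obstacle is that $\Phi(P)\neq1$, so the above only yields $w_p\in\Phi(P)\le Z(G)$ rather than $w_p=1$. To force a genuine violation I would exhibit an element $x\in P\setminus\Phi(P)$ for which $xy$ is still a $q$-element. Exploiting that $y$ induces on $P$ an automorphism $\sigma$ of order $q$ (as $y^{q}$ centralizes $P$), the $q^{b}$-th power of $xy$ equals the trace-type product $T(x)^{q^{b-1}}$, where $T(x)=x\cdot{}^{\sigma}x\cdots{}^{\sigma^{q-1}}x$. Since $\sum_{i=0}^{q-1}\sigma^{i}=(\sigma^{q}-1)(\sigma-1)^{-1}=0$ on the module $P/\Phi(P)$ (here $\sigma-1$ is invertible by fixed-point-freeness), the map $T$ sends $P$ into $\Phi(P)$; and on $\Phi(P)$ itself $T$ acts as the power map $z\mapsto z^{q}$, which is a bijection, so $\ker T\cap\Phi(P)=1$. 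The plan is to conclude from these two facts that $\ker T$ meets $P\setminus\Phi(P)$: any such $x$ satisfies $T(x)=1$, hence $(xy)^{q^{b}}=1$, so $|xy|=q^{b}\neq|x||y|$, the desired contradiction.

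I expect this last step to be the main difficulty. The clean count, namely $|\ker T|\ge|P|/|\Phi(P)|$ together with $\ker T\cap\Phi(P)=1$, forcing $\ker T\not\le\Phi(P)$, is immediate when $T$ is a homomorphism, i.e. when $P$ is abelian; but for nonabelian $P$ of class two the identity $T(xx')=T(x)T(x')\,B(x,x')$ carries a bilinear correction $B$ valued in $[P,P]\le\Phi(P)$, so $T$ is only a quadratic map. Making the counting argument rigorous for such $T$, or alternatively further restricting the structure of a minimal counterexample so that $P$ may be assumed abelian, is where the real work lies, and it is the step I would develop most carefully.
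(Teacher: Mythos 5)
First, a point of comparison: the paper itself offers no proof of this statement — Theorem~\ref{teobau} is quoted from Baumslag and Wiegold \cite{baubau} — so your attempt can only be measured against the standard argument and against the closely related technique this paper does use (Lemma~\ref{ddd} and Lemma~\ref{commfond}). Your reduction to a Schmidt group $G=P\rtimes\langle y\rangle$ is correct, the structural facts you invoke ($C_P(y)=\Phi(P)\le Z(G)$, fixed-point-free irreducible action on $P/\Phi(P)$, $[P,y]=P$, $y^q$ central) are the standard ones, and the Schur--Zassenhaus computation giving $|xy|=|w_p|\,q^b$ with $w_p\in\Phi(P)$ is sound; in particular the case of abelian $P$ is complete. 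The gap is exactly where you flag it: for nonabelian $P$ you need a nontrivial element of $\ker T$, and since $T$ is only a quadratic map, the inequality $|\ker T|\ge |P|/|\Phi(P)|$ does not follow from any general principle (fibres of a non-homomorphism can be arbitrarily unbalanced), so the proof as written is incomplete.

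The gap is genuine but easily filled — in fact by your own Schur--Zassenhaus step read in reverse. Since $T(x)$ is a $p$-element and $(xy)^{q^b}=T(x)^{q^{b-1}}$, one has $T(x)=1$ if and only if $xy$ is a $q$-element, if and only if $xy=y^u$ for some $u\in P$ (a $q$-element mapping to $y$ modulo $P$ lies in a Sylow subgroup $\langle y\rangle^u$ with $u\in P$, and $y^u$ is the unique element there with image $y$). Hence $\ker T=\{y^uy^{-1}:u\in P\}$, a set of exactly $|P:C_P(y)|=|P:\Phi(P)|>1$ elements, and together with $\ker T\cap\Phi(P)=1$ your counting argument closes. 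Better still, a single nontrivial element of $\ker T$ suffices, and it exists with no structure theory at all: since $[P,y]\neq 1$, choose $u\in P$ with $g:=[u,y^{-1}]=y^uy^{-1}\neq 1$; then $g\in P$, so $(|g|,|y|)=1$, while $gy=y^u$ has order $|y|$, so the hypothesis forces $|g|\,|y|=|gy|=|y|$, i.e. $g=1$, a contradiction. This conjugation trick renders the trace map, the Frattini-subgroup analysis, and the abelian/nonabelian case split unnecessary — it needs only a normal subgroup and a coprime element acting nontrivially on it — and it is precisely the engine of the paper's Lemma~\ref{ddd} and Lemma~\ref{commfond}, as well as of the original argument in \cite{baubau}. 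I would encourage you to rewrite the proof around that observation.
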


 Here the symbol $|x|$ stands for the order of the element $x$ in a group $G$. In this direction, in 2016, R.\,Bastos and P.\,Shumyatsky established a characterization for the nilpotency of the commutator subgroup  of a finite group \cite{BS}.

\begin{theorem}[\cite{BS}]\label{teoBS} Let $G$ be a finite group in which $|ab|=|a||b|$ whenever the elements $a,b$ are commutators of coprime orders. Then $G'$ is nilpotent.
\end{theorem}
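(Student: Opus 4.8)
The plan is to prove that $G'$ is the direct product of its Sylow subgroups, using the standard fact that a finite group is nilpotent if and only if any two of its elements of coprime orders commute. Applying this to $G'$, it suffices to show that for distinct primes $p,q$ a Sylow $p$-subgroup and a Sylow $q$-subgroup of $G'$ centralize one another. Here the key bridge is the focal subgroup theorem: if $P\in\mathrm{Syl}_p(G)$, then $P\cap G'\in\mathrm{Syl}_p(G')$ and $P\cap G'$ is generated by the elements $x^{-1}x^g=[x,g]$ with $x,x^g\in P$, that is, by commutators that are $p$-elements. Thus a Sylow $p$-subgroup of $G'$ is generated by commutators of $p$-power order, and likewise for $q$. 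Consequently, once one knows that every commutator of $p$-power order centralizes every commutator of $q$-power order, the two generating sets commute elementwise, so the corresponding Sylow subgroups centralize each other; ranging over all primes exhibits $G'$ as the direct product of its Sylow subgroups, hence nilpotent.

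The heart of the matter is therefore the following claim: if $x$ is a commutator with $|x|=p^a$ and $y$ a commutator with $|y|=q^b$, then $[x,y]=1$. The order hypothesis gives $|xy|=p^aq^b$, but this single equality does not by itself force commutativity (elementary semidirect products such as $V\rtimes C_6$ with $V$ elementary abelian already produce non-commuting coprime-order elements whose product has the ``expected'' order). The extra input is that the set of commutators is closed under conjugation, since $[u,v]^g=[u^g,v^g]$; hence every conjugate $x^{y^i}$ is again a commutator of order $p^a$, and the hypothesis applies to each pair $(x^{y^i},y)$, yielding $|x^{y^i}y|=p^aq^b$ for all $i$. The idea is to work inside the normal closure $W=\langle x\rangle^{\langle y\rangle}$, on which $y$ acts; after reducing to the case where $W$ is a $p$-group, so that $\langle y\rangle$ acts coprimely, one would exploit the coprime decomposition $W=C_W(y)[W,y]$ with $[W,y]=[W,y,y]$, combined with the equalities $|x^{y^i}y|=|x^{y^i}||y|$. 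Concretely, expanding $(wy)^{q^b}$ as a ``norm'' $\prod_i w^{y^{-i}}$ along the $\langle y\rangle$-orbit and demanding that it always retain the full order $p^a$ should, via a Baumslag--Wiegold-type computation, force $[W,y]=1$ and hence $[x,y]=1$.

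The step I expect to be the main obstacle is precisely this upgrade from ``orders multiply'' to genuine centralization, and it is made delicate by the fact that the hypothesis is available only for commutators, whereas the coprime-action machinery naturally wants the order condition for \emph{all} elements of the $p$-group $W$, i.e.\ for arbitrary products of the $x^{y^i}$, which need not themselves be commutators. I anticipate that the resolution is to run the argument along the single $\langle y\rangle$-orbit of $x$ rather than over all of $W$, invoking Theorem~\ref{teobau} on a suitable coprime section of $\langle x,y\rangle$; arranging that $W$ be a $p$-group (or circumventing that hypothesis) while ensuring that every element to which the order condition is applied remains an honest commutator is the crux on which the whole proof turns.
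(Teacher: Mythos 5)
A preliminary remark: the paper under review does not prove Theorem~\ref{teoBS} at all --- it is quoted as background from \cite{BS} --- so your attempt can only be measured against the proof in \cite{BS} and against the parallel arguments this paper gives for Theorems A and B, which use the same toolkit. Your opening reduction is correct and coincides with the known opening move: by the focal subgroup theorem, $P\cap G'$ is a Sylow $p$-subgroup of $G'$ generated by commutators of $p$-power order; consequently, if commutators of coprime prime-power orders always commute, then every Sylow subgroup of $G'$ is centralized by full Sylow subgroups of $G'$ for all other primes, hence normal in $G'$, and $G'$ is nilpotent. That part is sound.

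The genuine gap is the step you yourself flag as the crux, and it is not a repairable detail: the route you sketch cannot work as written. Expanding $(xy)^{q^b}$ as the norm-like product $\prod_{i} x^{y^{-i}}$ forces you to apply the order hypothesis to products of conjugates of $x$, and such products are in general \emph{not} commutators, so the hypothesis is silent about them; likewise nothing guarantees that $W=\langle x\rangle^{\langle y\rangle}$ is a $p$-group --- that is essentially the statement you are trying to prove --- and invoking Theorem~\ref{teobau} on a section of $\langle x,y\rangle$ fails for the same reason, since not every element of such a section is a commutator. What the actual proofs do instead is use the hypothesis in a completely different, much more economical way, but only in a coprime-action configuration: if a commutator $x$ normalizes a subgroup $N$ with $(|N|,|x|)=1$, then for $y\in N$ the element $g=[y,x^{-1}]=y^{-1}y^{x^{-1}}$ is a commutator lying in $N$, so $(|g|,|x|)=1$, while $gx=y^{-1}xy$ is conjugate to $x$; the equality $|gx|=|g||x|$ then forces $g=1$, hence $[N,x]=1$. (This is the key lemma of \cite{BS}, reproduced in this paper as Lemma~\ref{ddd} and Lemma~\ref{commfond}.) Since two arbitrary commutators of coprime orders need not sit in such a configuration, even this lemma does not finish the job by itself: the published proofs still need global structure theory --- Turull's tower characterization of Fitting height for the soluble case (compare Propositions~\ref{soluGk} and~\ref{main3}), and, in general, a minimal counterexample argument reducing to a quasisimple group with $G/Z(G)$ minimal simple, Thompson's classification (Theorem~\ref{thomClass}), the production of a commutator that is a $2$-element of order $2$ modulo $Z(G)$, and \cite[Theorem 3.8.2]{go} giving $t$ with $[a,t]$ of odd order, so that $a$ simultaneously inverts $[a,t]$ (as $a^2\in Z(G)$) and centralizes it (by the key lemma), a contradiction. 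Your proposal supplies the correct reduction and an accurate diagnosis of where the difficulty lies, but none of the machinery that overcomes it; as it stands it is not a proof.
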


 This theorem represents a first successful attempt to broaden Theorem~\ref{teobau} to the world of group-words. Recall that a {\it word} (or {\it group-word}) $w=w(x_1, \ldots, x_k)$ is a nontrivial element of the free group $F=F(x_1, \ldots, x_k)$ on free generators $x_1, \ldots, x_k$. A word is a {\it commutator word} if it belongs to the commutator subgroup $F'$ of $F$. 
 
 Given a word $w$ and a group $G$, we denote by $G_w$ the set of all $w$-values, and the subgroup of $G$ generated by $G_w$ is called the {\it verbal subgroup of $G$ corresponding to $w$}, and it is usually denoted by $w(G)$.  An example of commutator word is the $k$th lower central word $\gamma_k=~\gamma_k(x_1, \ldots, x_k)$ defined inductively by the formulae
\[
\gamma_1=x_1
\qquad \text{and} \qquad
\gamma_{k+1}=[\gamma_{k},x_{k+1}]
\quad
\text{for $k\ge 1$.}
\]

 As one can aspect, the verbal subgroup corresponding to $\gamma_k$ is the familiar $k$th term of the lower central series $\gamma_k(G)$, and the $\gamma_k$-values in $G$ are called $\gamma_k$-commutators.

Using the language of group-words, Theorem~\ref{teobau} and Theorem~\ref{teoBS}  show that the  nilpotency of the verbal subgroups $G$ and $G'$ can be obtained imposing the condition about orders on the sets $G_x$ and $G_{[x,y]}$, respectively. This suggests the question whether a similar criterion holds for other group-words.

\begin{qst}\label{qGen}
Let $w$ be a group-word and let $G$ be a finite group in which $|ab|=|a||b|$ whenever the $w$-values $a,b$ have coprime orders. Is then $w(G)$ necessarily nilpotent?
\end{qst}

Easy counterexamples show that Question \ref{qGen} has negative answer in general. Indeed, one can consider the symmetric group $G$ of degree $3$ and the word $w=x^3$. Then the nontrivial $w$-values have order $2$, but $w(G)=G$.

Also in the case of commutator words Question \ref{qGen} has negative answer since one can consider the alternating group $\mathbb{A}_5$ of degree $5$ and the word $[x,y^{10},y^{10},y^{10}]$ all of whose nontrivial values in $\mathbb{A}_5$ have order $2$ (see \cite{MT}).

Despite all the underlined negative answers, C.\,Monetta and A.\,Tortora proved that there is a family of commutator words which answers in the affermative to Question~\ref{qGen}. 

\begin{theorem}[\cite{MT}]\label{teoMT}
Let $w=w(x_1, \ldots, x_k)$ be the simple commutator word defined by 
\[w=[x_{i_1}, \ldots, x_{i_n}],\]
where $i_{1} \neq i_j$ for every $j \in \{2, \ldots, n\}$ and $i_j \in \{1, \ldots, k\}$ for every $j$. If $G$ is a finite group, then $w(G)$ is nilpotent if and only if $|ab|=|a||b|$ for any $w$-values $a,b\in G$ of coprime orders. 
\end{theorem}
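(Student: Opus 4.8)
The plan is to treat the two implications separately, with essentially all of the work in the converse. For the forward direction I would argue as follows, using nothing about the shape of $w$: if $w(G)$ is nilpotent, then it is the direct product of its Sylow subgroups, say $w(G)=\prod_p O_p$. Every $w$-value lies in $w(G)$, so two $w$-values $a,b$ of coprime orders have their nontrivial Sylow components supported on disjoint sets of primes, whence $a$ and $b$ lie in mutually commuting direct factors; therefore $ab=ba$ and $|ab|=\mathrm{lcm}(|a|,|b|)=|a||b|$.

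For the converse I would assume that $|ab|=|a||b|$ for every pair of $w$-values of coprime orders and argue by contradiction, taking $G$ of least order with $w(G)$ non-nilpotent. Here I would induct simultaneously on the length $n$ of the commutator word and on $|G|$: the cases $n=1$ and $n=2$ are already settled, being exactly \cref{teobau} (where $w(G)=G$) and \cref{teoBS} (where $w(G)=G'$). Writing $w=[u,x_{i_n}]$ with $u=[x_{i_1},\dots,x_{i_{n-1}}]$, the inner word $u$ is again a simple commutator word whose first variable $x_{i_1}$ occurs only once, so the inductive hypothesis is available for $u$. The standard reductions let me concentrate on $w(G)$ itself (it is normal and generated by its $w$-values) and, passing to quotients $G/N$ in which the images of $w$-values are again $w$-values, reduce to the case where the failure of nilpotency is localised: $w(G)$ contains a minimal non-nilpotent subgroup $H=P\rtimes\langle y\rangle$, with $P=O_p(H)$ its normal Sylow $p$-subgroup and $\langle y\rangle$ a cyclic $q$-group acting nontrivially.

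The decisive step is then to mimic the Baumslag--Wiegold mechanism inside $H$ using genuine $w$-values. Choosing $1\neq a\in P$ not fixed by the action and pairing it with $y$, one has coprime orders while $a$ and $y$ do not commute; in such a Schmidt group the product $ay$ lies in a complement conjugate to $\langle y\rangle$, so $|ay|=|y|<|a||y|$, which contradicts the hypothesis. Thus the whole problem reduces to \emph{realising} a nontrivial $p$-element $a$ of $P$ together with the $q$-element $y$ as $w$-values of the prescribed prime-power orders, at which point the hypothesis is violated and $w(G)$ must in fact be nilpotent.

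I expect this last point to be the main obstacle, and it is exactly where the condition $i_1\neq i_j$ is indispensable. In general neither products nor powers of $w$-values are $w$-values, so there is no free passage from information about the set $G_w$ to information about arbitrary elements of $w(G)$. The unique occurrence of the first variable is what provides the needed flexibility: since a $w$-value is a commutator $[c,g]$ of a $u$-value $c$ with an arbitrary $g\in G$, and $x_{i_1}$ sits inside $c$ and nowhere else, the inductive description of $u$-values feeds directly into the construction of the required $w$-values, and one may substitute for $x_{i_1}$ a $p$-part or a chosen power independently of the remaining entries so as to hit the prescribed prime-power orders inside the section $H$. Carrying this out carefully, together with the coprimality bookkeeping needed to push the hypothesis through the quotients $G/N$ in the inductive reductions, is the technical heart of the argument.
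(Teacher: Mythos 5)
Your forward implication is fine and standard. The converse, however, is a programme rather than a proof: the step you yourself call ``the technical heart'' --- realising a non-commuting pair of coprime prime-power orders inside the Schmidt subgroup $H=P\rtimes\langle y\rangle\leq w(G)$ as genuine $w$-values of $G$ --- is left entirely undone, and that step \emph{is} the theorem. The hypothesis constrains only the set $G_w$, and elements of a Schmidt subgroup of $w(G)$ have no reason whatsoever to lie in $G_w$, so the Baumslag--Wiegold mechanism cannot be ``mimicked'' until this is supplied. The scaffolding around the missing step also has independent defects. First, the order hypothesis does not pass to quotients $G/N$: orders of images drop, and a coprime pair of $w$-values in $G/N$ need not lift to a coprime pair in $G$; this is not ``bookkeeping'', and the arguments in this circle of papers are structured precisely to avoid ever transferring the hypothesis to a quotient. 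Second, your double induction is never usable: the hypothesis on $w$-values does not imply the corresponding hypothesis on values of the inner word $u$, so ``the inductive hypothesis is available for $u$'' is unjustified. Third, the concrete claim that for $a\in P$ not fixed by $y$ the product $ay$ lies in a complement conjugate to $\langle y\rangle$, whence $|ay|=|y|$, is false: $\mathrm{SL}(2,3)=Q_8\rtimes C_3$ is a minimal non-nilpotent group, and for $a=i$ and $y$ of order $3$ the element $ay$ has order $6$. (This last point is repairable --- Theorem~\ref{teobau} applied to the non-nilpotent group $H$ yields \emph{some} coprime pair violating the order condition --- but that pair lives in $H$, not in $G_w$, which is exactly the unresolved issue.)

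For comparison: the present paper does not prove this statement (it quotes it from \cite{MT}), but its own proofs of Theorems A and B display the template that the actual argument follows, and it differs from your plan at every stage. The engine is a substitution lemma in the style of Lemma~\ref{ddd}: because $i_1\neq i_j$ for all $j\geq 2$, one may substitute $y\in N$ for $x_{i_1}$ and $x^{-1}$ for \emph{every} other variable, producing the $w$-value $g=[y,x^{-1},\ldots,x^{-1}]$; then $gx$ is a conjugate of $x$, so the hypothesis applied to the coprime pair $g,x$ (with $x$ itself a $w$-value in all applications) forces $g=1$, and coprime action gives $[N,x]=1$. This, not a Schmidt subgroup, is where the single occurrence of $x_{i_1}$ does its work. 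The soluble case is then settled by a tower/Fitting-height argument as in Proposition~\ref{soluGk}, and the non-soluble case by taking a minimal counterexample, showing it is quasisimple with $G/Z(G)$ minimal simple, and invoking Thompson's classification (Theorem~\ref{thomClass}) through Proposition~\ref{maj} together with \cite[Theorem 3.8.2]{go} to manufacture a $2$-element $w$-value inverting an element of odd order, contradicting the substitution lemma. Your outline contains no substitute for this input on minimal simple groups, and I do not see how the non-soluble case could be closed without it.
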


Notice that $\gamma_n$-commutators are particular instances of simple commutator words.
Indeed, Theorem~\ref{teoMT} extends the following result proved by R.\,Bastos, C.\,Monetta and P.\,Shumyatsky in \cite{bms}.

\begin{theorem}[\cite{bms}]\label{metanilp} Let $k$ be a positive integer. The $k$th term of the lower central series of a finite group $G$ is nilpotent if and only if $|ab|=|a||b|$ for any $\gamma_k$-commutators $a,b\in G$ of coprime orders.
\end{theorem}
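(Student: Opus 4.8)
The forward implication is routine: if $\gamma_k(G)$ is nilpotent it is the direct product of its Sylow subgroups, so any two of its elements of coprime order---in particular any two $\gamma_k$-commutators $a,b$ of coprime order---lie in different Sylow factors, commute, and therefore satisfy $|ab|=|a||b|$. For the converse I would induct on $|G|$. When $k=1$ we have $\gamma_1(G)=G$ and the $\gamma_1$-commutators are all the elements of $G$, so the assertion is exactly Theorem~\ref{teobau}. Assume then $k\ge 2$, write $N=\gamma_k(G)$, suppose the order condition holds for every coprime pair of $\gamma_k$-commutators, and suppose toward a contradiction that $N$ is not nilpotent, i.e. $\gamma_{\infty}(N)\neq 1$.

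Since each $\gamma_k$-commutator of a subgroup $H\le G$ is a $\gamma_k$-commutator of $G$, the hypothesis passes to subgroups, so by minimality $\gamma_k(H)$ is nilpotent for every proper $H$. As $N$ is non-nilpotent it contains a Schmidt subgroup, i.e. a minimal non-nilpotent group $S=P\rtimes\langle y\rangle$; by the Schmidt--Redei description $S$ is a $\{p,q\}$-group with $P=O_p(S)$ its normal Sylow $p$-subgroup and $\langle y\rangle$ a cyclic Sylow $q$-subgroup acting fixed-point-freely on $P/\Phi(P)$. The plan is to locate inside this configuration a nontrivial $p$-element $x\in P$ and the $q$-element $y$ that are \emph{simultaneously} $\gamma_k$-commutators of $G$; here I would exploit that the set of $\gamma_k$-values is conjugation-closed and that $N$, together with the Sylow subgroups appearing in its residual, is generated by $\gamma_k$-values, possibly after replacing $G$ by a smaller subgroup supplied by the induction.

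Granting such $x$ and $y$, the contradiction is a short order computation in $S$, in the spirit of Baumslag--Wiegold. Using $(xy)^{q}=\bigl(\prod_{i=0}^{q-1}{}^{y^{i}}x\bigr)y^{q}$ together with the fact that $y^{q}$ centralizes $P$ and that $1+\phi+\dots+\phi^{q-1}$ annihilates the fixed-point-free module $P/\Phi(P)$ (with $\phi$ the action of $y$), one checks, after the standard reduction to the case where $P$ is elementary abelian, that $|xy|$ is a power of $q$; since $|x|$ is a nontrivial power of $p$ this gives $|xy|\neq|x|\,|y|$ for a coprime pair of $\gamma_k$-commutators, contradicting the hypothesis and forcing $\gamma_{\infty}(N)=1$. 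The main obstacle is exactly the realization step of the previous paragraph: being a $\gamma_k$-value is far more restrictive than being an arbitrary element, and---crucially---a prime-power part of a $\gamma_k$-commutator need not again be a $\gamma_k$-commutator, so one cannot simply extract the $p$- and $q$-parts of a single value. Making the reductions deliver honest $\gamma_k$-commutators $x$ and $y$ in the Schmidt section, via conjugation-closedness and the normal generation of $N$ by $\gamma_k$-values, is where essentially all of the work lies.
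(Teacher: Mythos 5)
Your outline is correct on the easy direction and on the opening reduction, but it is not a proof: the step you yourself flag as ``where essentially all of the work lies'' is precisely the content of the theorem, and nothing in your proposal supplies it. In a minimal counterexample you know only that every proper subgroup $H$ inherits the hypothesis, hence has $\gamma_k(H)$ nilpotent (and is then soluble, since $H/\gamma_k(H)$ is nilpotent); a Schmidt subgroup $S=P\rtimes\langle y\rangle$ of $N=\gamma_k(G)$ certainly exists, but its generators bear no relation to the set of $\gamma_k$-values. Conjugation-closedness of the set of values and the fact that $N$ is generated by values give you no way to replace $y$, or any nontrivial element of $P$, by an honest $\gamma_k$-commutator sitting in the same fixed-point-free configuration; and since the hypothesis constrains only pairs of values of coprime orders, the order computation in $S$ cannot even begin without this. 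The gap is not a removable technicality: the example of $\mathbb{A}_5$ and the word $[x,y^{10},y^{10},y^{10}]$ from the introduction shows the statement is false for general commutator words, so any correct argument must exploit the specific structure of $\gamma_k$-values at exactly the point you leave open. There is also a secondary flaw: the ``standard reduction to $P$ elementary abelian'' means passing to $S/\Phi(P)$, but the order hypothesis does not pass to quotients; working inside $P$ itself the norm argument only yields $(xy)^{|y|}\in\Phi(P)$, which is not yet a contradiction.

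For comparison: this paper does not reprove Theorem~\ref{metanilp} (it cites \cite{bms}), but the proofs of its Theorems A and B follow the same architecture as \cite{bms}, and it is instructive to see how that architecture replaces your missing step. First, a coprime-action lemma is proved directly from the order hypothesis: if a suitable commutator value $x$ normalizes $N$ with $(|N|,|x|)=1$, then $[N,x]=1$ (cf.\ Lemma~\ref{ddd}), because $[y,\ _{k-1}x^{-1}]$ is itself a value conjugate into a coset of $x$. Second, the soluble case is settled by towers and Fitting height (cf.\ Proposition~\ref{soluGk}), using that the relevant tower subgroup is generated by values of prime-power order. Third, a perfect group is shown to be generated by values of $p$-power order for $p\neq q$ via the normal $p$-complement theorem \cite[Theorem 7.4.5]{go} (cf.\ Lemma~\ref{gk}); this, with the coprime lemma, forces the soluble radical of a minimal counterexample to be central, so $G$ is quasisimple. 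Finally, Thompson's classification of minimal simple groups (Theorem~\ref{thomClass}) and Proposition~\ref{maj} produce a value $a$ which is a $2$-element of order $2$ modulo the center, and \cite[Theorem 3.8.2]{go} yields $t$ with $[a,t]$ of odd order; then $a$ inverts $[a,t]$ but also centralizes it by the coprime lemma, a contradiction. It is this chain of generation results and the passage through quasisimple groups --- not a Schmidt subgroup --- that manufactures genuine $\gamma_k$-commutators of coprime orders violating the hypothesis; repairing your outline would amount to rebuilding it.
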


This result contributes towards a positive answer to the following conjecture made in \cite{BS}, which remains still an open problem.

\begin{conjecture}\label{shumy} Let $w$ be a multilinear commutator word and let $G$ be a finite group in which $|ab|=|a||b|$ whenever the $w$-values $a,b$ have coprime orders. Then $w(G)$ is nilpotent.
\end{conjecture}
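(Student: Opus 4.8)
The plan is to argue by induction on $|G|$, funnelling the statement into a minimal non-nilpotent section and there producing a pair of $w$-values of coprime orders that violates the order hypothesis. Write $w=w(x_1,\dots ,x_k)$ for the given multilinear commutator word, and suppose that $G$ is a counterexample of least order, so that $|ab|=|a||b|$ holds for all $w$-values $a,b$ of coprime orders while $w(G)$ fails to be nilpotent. The first task is to verify that this hypothesis is inherited by the sections relevant to the induction. For a normal subgroup $N$ one has $w(G/N)=w(G)N/N$, and every $w$-value of $G/N$ is the image of a $w$-value of $G$; the point requiring care is to lift a pair of coprime $w$-values of $G/N$ to a pair of coprime $w$-values of $G$ while keeping the orders under control. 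Here I would exploit that multilinear commutator words are outer commutator words, whose value sets behave well under coprime actions and under passage to quotients, exactly as in the reductions carried out for $\gamma_k$ in Theorem~\ref{metanilp} and for simple commutator words in Theorem~\ref{teoMT}. Granting the inheritance, minimality forces $w(G)$ to be a minimal non-nilpotent group.

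By the Schmidt--Iwasawa structure theorem, such a group has the form $w(G)=P\rtimes C$, where $P$ is a normal Sylow $p$-subgroup, $C=\langle y\rangle$ is cyclic of $q$-power order with $q\neq p$, and $C$ acts irreducibly and nontrivially, hence without nonzero fixed points, on $P/\Phi(P)$; in particular $[P,C]=P$ and $C_P(y)\le\Phi(P)$. The local mechanism I would rely on is the following: in such a group, a nontrivial $p$-element $a$ lying outside $\Phi(P)$ does not commute with the generator $b=y$ of $C$, and for coprime non-commuting elements of a Schmidt group one has $|ab|<|a||b|$. This inequality is obtained by a direct calculation inside $\langle a,b\rangle$: writing $ab=uv$ with $u$ its $p$-part and $v$ its $q$-part, the element $v$ is conjugate to $b$ and the absence of fixed points of $C$ on $P/\Phi(P)$ confines $u$ to a proper $C$-invariant section of $P$, forcing $|u|<|a|$ and hence $|ab|<|a||b|$. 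This is precisely the computation underlying Theorem~\ref{metanilp} and Theorem~\ref{teoMT}, and it delivers the sought contradiction \emph{as soon as the two elements $a,b$ can be chosen among the $w$-values}.

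The whole problem therefore concentrates on a single point: producing, inside the Schmidt group $w(G)=P\rtimes C$, a $w$-value that is a $p$-element outside $\Phi(P)$ together with a $w$-value that is a generating $q$-element of $C$. Since $w(G)$ is generated by its $w$-values, some $w$-value must project nontrivially onto $C$ and some $w$-value must lie outside $\Phi(P)\rtimes C$; the difficulty is to refine these into $w$-values of the two prescribed prime-power orders sitting in the prescribed positions. To this end I would invoke the coprime-commutator machinery developed for Theorems~A and~B, together with Shumyatsky's results that the verbal subgroup of a multilinear commutator word is generated by $w$-values of prime-power order and is well behaved under coprime operator groups.

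This last step is where I expect the genuine obstruction to lie. For $\gamma_k$ and for simple commutator words the explicit shape of the values makes the extraction of the two prime-power $w$-values routine, but for an arbitrary multilinear commutator word the value set is far less transparent, and it is unclear that every minimal non-nilpotent section is reached by $w$-values of both prescribed types. The natural attempt to bypass this is to induct on the syntactic complexity of $w$, writing $w=[u,v]$ with $u,v$ multilinear commutator words of smaller weight and viewing a $w$-value as a commutator of a $u$-value and a $v$-value; the obstacle is that the order hypothesis imposed on $w$-values does not visibly descend to a usable hypothesis on $u$-values and $v$-values, so the induction on word structure does not close. Resolving this transfer problem---equivalently, gaining uniform control of the prime-power $w$-values inside a Schmidt section for every multilinear commutator word at once---is exactly the open core of Conjecture~\ref{shumy}.
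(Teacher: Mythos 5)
This statement is not a theorem of the paper: it is stated explicitly as an open conjecture (``which remains still an open problem''), and your proposal, by its own closing paragraph, does not close it either --- you correctly identify the transfer problem for general multilinear words as ``exactly the open core'' and leave it unresolved. So there is no proof here to compare with, and the proposal cannot be accepted as one. Beyond that honest admission, several of the intermediate steps you treat as routine are themselves broken. First, the inheritance of the hypothesis by quotients is not established: if $aN,bN$ are $w$-values of $G/N$ of coprime orders, their lifts to $w$-values of $G$ need not have coprime orders, and $|ab|=|a||b|$ in $G$ says nothing about the orders downstairs; the appeal to outer commutator words behaving ``well under coprime actions'' is a gesture, not an argument. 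Second, even granting inheritance, the reduction ``minimality forces $w(G)$ to be a minimal non-nilpotent group'' is false: proper subgroups of $G$ inherit the hypothesis (since $H_w\subseteq G_w$), but the proper subgroups of $w(G)$ are not of the form $w(H)$, and nothing confines a minimal counterexample to the soluble world where Schmidt groups live. Indeed, in every proved instance of this circle of results --- Theorem~\ref{metanilp}, Theorem~\ref{teoMT}, and Theorems A and B of this paper --- the soluble case is the easy half (handled by towers and Lemma~\ref{ddd} or Lemma~\ref{commfond}), and the entire difficulty is the nonsoluble case: a minimal counterexample is shown to be quasisimple and is attacked through Thompson's classification of minimal simple groups (Theorem~\ref{thomClass}), Proposition~\ref{maj}, and the existence of $t$ with $[a,t]$ of odd order via \cite[Theorem 3.8.2]{go}. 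Your Schmidt-group funnel silently assumes solubility and omits the hard case entirely.

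Third, your ``local mechanism'' is misstated. It is not true in general that non-commuting elements of coprime orders satisfy $|ab|<|a||b|$, and no such inequality drives the known proofs. The actual mechanism (Lemma~\ref{ddd} here, and its analogues in \cite{bms} and \cite{MT}) is a conjugacy trick: for a $w$-value $x$ acting coprimely on $N$ and $y\in N$, the element $g=[y,\,{}_{k-1}\,x^{-1}]$ is again a $w$-value, and $gx$ is a \emph{conjugate} of $x$, so $|gx|=|x|$; the hypothesis $|gx|=|g||x|$ then forces $g=1$. The whole argument hinges on the Engel-type commutator $[y,\,{}_{k-1}\,x^{-1}]$ being a $w$-value, which holds for $\gamma_k$ and for simple commutator words whose last variables repeat (the class covered by Theorem~\ref{teoMT}), but is exactly what is unavailable for a general multilinear commutator word such as $\delta_k$, where all variables are distinct and no variable may be repeated to iterate the commutator. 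Your final paragraph locates this obstruction accurately, but locating it is a restatement of Conjecture~\ref{shumy}, not progress toward proving it; as written, the proposal reproduces the known strategy where it works and stops precisely where the conjecture begins.
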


Recall that a multilinear commutator word is a word obtained nesting commutators but using always different variables. However, not all simple commutator words are multilinear. Indeed, another example of simple commutator words is the $k$th Engel word $\e_k=~\e_k(x,y)$ defined inductively by the formulae
\[
\e_1=[x,y]
\qquad \text{and} \qquad
\e_{k+1}=[\e_k,y]=[x, \overset{k}{y, \ldots, y}]
\quad
\text{for $k\ge 1$.}
\]

Therefore, from Theorem~\ref{teoMT} it follows that Question~\ref{qGen} has a positive answer for Engel words, too.

\bigskip
 
A similar question could be asked for commutators of elements of coprime orders, namely for the coprime commutators $\g_k$ and $\d_k$.

The coprime commutators $\g_k$ and $\d_k$  were introduced by P.\,Shu\-myatsky in \cite{pavel} as a tool to study the nilpotency and the solubility of finite groups in terms of commutators of elements of coprime orders. They are defined as follows. 

Let $G$ be a finite group. Every element of $G$ is both a $\g_1$-commutator and a $\d_0$-commutator. Now let $k \geq 2$ and let $X$ be the set of all elements of $G$ that are powers of $\g_{k-1}$-commutators. An element $g \in G$ is a $\g_k$-commutator if there exist $a \in X$ and $b \in G$ such that $g=[a,b]$ and $(|a|,|b|)=1$. For $k \geq 1$ let $Y$ be the set of all elements of $G$ that are powers of $\d_{k-1}$-commutators. An element $g \in G$ is a $\d_k$-commutator if there exist $a, b \in Y$ such that $g = [a, b]$ and $(|a|, |b|) = 1$. The subgroups of G generated by all $\g_k$-commutators and all $\d_k$-commutators are denoted by $\g_k(G)$ and $\d_k(G)$, respectively. 

One can easily see that if $N$ is a normal subgroup of $G$ and $x$ is an element of $G$ whose image in $G/N$ is a $\g_k$-commutator (respectively a $\d_k$-commutator), then there exists a $\g_k$-commutator (respectively a $\d_k$-commutator) $y$ in $G$ such that $x \in yN$. 

The commutators $\g_k$ and $\d_k$ are closely related to the commutators $\gamma_k$ and $\delta_k$, that is, they control the nilpotency and the solubility of a group as $\gamma_k$ and $\delta_k$ do. Indeed, in \cite{pavel}, it was shown that $\g_k(G)=1$ if and only if $G$ is nilpotent, while $\d_k(G)=1$ if and only if $G$ is soluble and the Fitting height of $G$ is at most $k$. 

Coprime commutators have been studied by many authors (for example see \cite{as, ast, FC, monak, ps}).  In \cite{monak} it is established that the nilpotent residual $\gi(G)$ of a finite group $G$ is generated by commutators of primary elements of coprime orders, where a primary element is an element of prime power order. Moreover, in the same paper, V.\,S.\,Monakhov proved that the nilpotent residual $\gi(G)$ of a finite soluble group $G$ is nilpotent if and only if $|ab| \geq |a| |b|$ for every $a$ and $b$ commutators of primary elements of coprime orders.

Similarly, in \cite{FC} A.\,Freitas de Andrade and A.\,Carrazedo Dantas studied the nilpotency of $\gi(G)$ looking at the orders of powers of $\d_1$-commutators, where $G$ is a finite group. More precisely they proved that the nilpotent residual $\gi(G)$ of a finite group $G$ is nilpotent provided that $|ab| = |a||b|$ whenever $a,b$ are powers of $\d_1$-commutators of coprime orders. In \cite{FC}, they also posed the following question.

\begin{qst}\label{alexi}
Let $k$ be a non-negative integer and let $G$ be a finite group in which $|ab| = |a||b|$ whenever the elements $a,b$ are (powers of) $\d_k$-commutators of coprime orders. Is then the subgroup $\d_k(G)$ nilpotent?
\end{qst}

\noindent When $k=0$, the answer is positive by Theorem~\ref{teobau}.
At the same time, one may think to restate Question~\ref{alexi} in terms of $\gamma_k^*$-commutators. 

Our first main result is the following, which gives a complete answer for $\g_k$-commutators.

\begin{thmA}\label{mainGk}
Let $G$ be a finite group and let $k \geq 2$. Then $\gamma_k^*(G)$ is nilpotent if and only if $|ab|=|a| |b|$ whenever $a$ and $b$ are $\gamma_k^*$-commutators such that $(|a|,|b|)=1$.
\end{thmA}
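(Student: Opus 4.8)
The forward implication is the easy one. If $\gamma_k^*(G)$ is nilpotent, then it equals the direct product of its Sylow subgroups, so any two of its elements of coprime order lie in distinct Sylow factors and therefore commute. In particular, for $\gamma_k^*$-commutators $a,b$ with $(|a|,|b|)=1$ we obtain $ab=ba$, whence $|ab|=\operatorname{lcm}(|a|,|b|)=|a||b|$. For the converse I would begin by translating the numerical hypothesis into a commuting condition: by the standard fact (used already in \cite{BS}, \cite{bms}, \cite{MT}, and underlying Monakhov's $|ab|\ge|a||b|$ formulation in \cite{monak}) that two elements $a,b$ of coprime order satisfy $|ab|=|a||b|$ if and only if $[a,b]=1$, the hypothesis of Theorem~A is equivalent to the assertion that any two $\gamma_k^*$-commutators of coprime order commute. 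I would also record at the outset that the set of $\gamma_k^*$-commutators is invariant under conjugation and under the coprime action bookkeeping, since $[a,b]^g=[a^g,b^g]$, conjugation preserves orders, and a conjugate of a power of a $\gamma_{k-1}^*$-commutator is again such a power.

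The plan is then to argue by induction on $|G|$, fixing a minimal counterexample $G$ in which $\gamma_k^*(G)$ is not nilpotent. The guiding idea is to promote the commuting condition from the \emph{generators} of $H:=\gamma_k^*(G)$ to \emph{all} coprime-order elements of $H$, after which Baumslag--Wiegold (Theorem~\ref{teobau}), applied to $H$, forces $H$ to be nilpotent and yields a contradiction. The first block of work is to show that the hypothesis is inherited by proper quotients $G/N$: images of $\gamma_k^*$-commutators are $\gamma_k^*$-commutators, and given coprime-order $\gamma_k^*$-commutators $\bar a,\bar b$ in $G/N$ one uses the lifting property recorded in the introduction, together with coprime-action and Hall-subgroup arguments, to pass to primary parts and lift to commuting $\gamma_k^*$-commutators of $G$; thus $\gamma_k^*(G/N)=HN/N$ is nilpotent for every nontrivial normal $N$. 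The usual minimal-counterexample reductions then force $H$ to have a unique minimal normal subgroup $V$, which may be taken elementary abelian of some prime exponent $p$, with $H/V$ nilpotent.

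With $G$ reduced to this configuration, nilpotency of $H$ fails precisely because some element of order coprime to $p$ acts nontrivially on $V$; concretely, there is a $q$-element $y$ with $q\neq p$ inducing a nontrivial automorphism of $V$. The core of the argument is then to manufacture, out of this nontrivial coprime action, two $\gamma_k^*$-commutators of coprime order that do \emph{not} commute, contradicting the reformulated hypothesis. Here one exploits that $k\ge 2$: using Shumyatsky's characterization ($\gamma_k^*(G)=1$ if and only if $G$ is nilpotent) and the recursive definition of $\gamma_k^*$-commutators through powers of $\gamma_{k-1}^*$-commutators, one locates a $\gamma_{k-1}^*$-commutator whose $p$- and $q$-parts can be combined, via the Thompson $A\times B$-type and coprime-action lemmas, to produce a $p$-element and a $q$-element among the $\gamma_k^*$-commutators that witness the nontrivial action on $V$.

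The hard part will be this last step, namely the honest production of the two non-commuting primary $\gamma_k^*$-commutators, together with making the quotient reduction rigorous. Both hinge on controlling how the orders of $\gamma_k^*$-commutators behave under passage to powers, to quotients, and under coprime action, since the commuting hypothesis is available only between \emph{genuine} $\gamma_k^*$-commutators of coprime order and not between arbitrary elements of $H$. Once the commuting property has been propagated to all coprime-order elements of $H$, the conclusion is formal via Theorem~\ref{teobau}, so the entire weight of the proof rests on this bridge from the commutator generators to the whole coprime subgroup and on the accompanying coprime-action estimates.
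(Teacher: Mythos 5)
Your forward implication and the reformulation of the hypothesis as ``coprime-order $\gamma_k^*$-commutators commute'' are correct and agree with the paper, but the converse as you outline it has two gaps that are fatal to the architecture, not just technical debts. The first is the quotient reduction. The hypothesis of Theorem A concerns \emph{genuine} $\gamma_k^*$-commutators, and the set of $\gamma_k^*$-commutators is not closed under taking powers --- this is precisely why the paper's Theorem B must be stated for powers of $\delta_k^*$-commutators and why Question~\ref{alexi} distinguishes the two. Given coprime-order $\gamma_k^*$-commutators $\bar a,\bar b$ in $G/N$, the lifting property produces $\gamma_k^*$-commutators $a,b$ of $G$ with $aN=\bar a$, $bN=\bar b$, but their orders need not be coprime (say $|\bar a|=2$, $|a|=6$, $|b|=|\bar b|=3$), and your proposed repair --- passing to primary parts --- takes you outside the set of $\gamma_k^*$-commutators, where the hypothesis says nothing. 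The paper avoids this obstruction entirely: its minimality argument passes to \emph{subgroups} (a $\gamma_k^*$-commutator of $H\le G$ is one of $G$, so the hypothesis restricts), never to quotients, and concludes that all proper subgroups of a minimal counterexample are soluble.

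The second gap is that the structure you claim for the minimal counterexample --- unique minimal normal subgroup $V$, elementary abelian, with $H/V$ nilpotent --- is exactly what fails in the hard case. The paper first disposes of soluble $G$ (Proposition~\ref{soluGk}, via Turull's tower criterion together with Lemma~\ref{ddd}) and then shows a minimal counterexample must be quasisimple with $G/Z(G)$ a minimal simple group. There $H=\gamma_k^*(G)=\gamma_\infty(G)=G$ by Proposition~\ref{gammainfk} and perfectness, so the unique minimal normal subgroup is either central or, when $Z(G)=1$, equal to $G$ itself, hence nonabelian simple: your elementary abelian configuration never arises, and an argument built on a $q$-element acting on an abelian $V$ cannot rule out, say, $G\cong \mathbb{A}_5$. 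Excluding the quasisimple case is where the paper's real work lies, and it is not soft: it needs Thompson's classification of minimal simple groups through Proposition~\ref{maj}, the generation result Lemma~\ref{gk} (resting on Frobenius' normal $p$-complement theorem \cite[Theorem 7.4.5]{go}) to force $F(G)=Z(G)$, and \cite[Theorem 3.8.2]{go} to produce $t$ with $[a,t]$ of odd order, so that $a$ both inverts and (by Lemma~\ref{ddd}) centralizes $[a,t]$. Your closing step --- promoting commutation to all coprime pairs in $H$ and invoking Theorem~\ref{teobau} --- is likewise unproved and unnecessary in the paper's scheme, since arbitrary elements of $H$ are products of commutators and out of reach of the hypothesis; the paper instead derives contradictions directly from the commutator values themselves.
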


 In particular, Theorem A applies to $\gi(G)$, showing that the nilpotent residual $\gi(G)$ of a finite group $G$ is nilpotent if and only if $|ab|=|a| |b|$ whenever $a$ and $b$ are $\gamma_k^*$-commutators of coprime orders, for some $k \geq 2$. Moreover, since $\g_2$-commutators and $\d_1$-commutators coincide, Theorem A shows that it is enough to impose the condition on the orders of $\d_1$-commutators without considering their powers, and that a group satisfying such a condition is necessarily soluble, improving both results in \cite{monak} and \cite{FC}.

Concerning the case when $k \geq 2$, we answer in the positive to Question~\ref{alexi} imposing the condition on the orders of powers of $\d_k$-commutators. Indeed we succeed in proving the following result.

\begin{thmB}\label{mainDk}
Let $G$ be a finite group and let $k \geq 2$. Then $\d_k(G)$ is nilpotent if and only if $|ab|=|a| |b|$ whenever $a$ and $b$ are powers of $\d_k$-commutators such that $(|a|,|b|)=1$.
\end{thmB}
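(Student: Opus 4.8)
For the converse direction I would argue as follows, the direct implication being routine: if $\d_k(G)$ is nilpotent it is the direct product of its Sylow subgroups, so any two of its elements of coprime orders commute; since every power of a $\d_k$-commutator lies in $\d_k(G)$, this already gives $|ab|=|a||b|$ for all coprime-order powers of $\d_k$-commutators. For the hard direction, write $\t$ for the order hypothesis. The first observation is that $\t$ is inherited by quotients: by the lifting property recorded in the introduction, every $\d_k$-commutator of $G/N$ is the image of a $\d_k$-commutator of $G$, hence so is every power, and coprimality of orders survives passage to images; thus $G/N$ again satisfies $\t$. This makes an induction on $|G|$ available, and I would argue by minimal counterexample: assume $\d_k(G)$ is non-nilpotent while $\d_k(G/N)=\d_k(G)N/N$ is nilpotent for every nontrivial $N\trianglelefteq G$.

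The first genuine step is to prove that such a $G$ is soluble. Here I would invoke the classification-based fact that a nonabelian finite simple group $S$ is perfect and non-soluble, so that $\d_j(S)=S$ for every $j$; in particular every element of $S$ lies in $\d_k(S)$. By Theorem~\ref{teobau} (Baumslag--Wiegold), $S$ being non-nilpotent forces coprime-order elements $x,y\in S$ with $|xy|\neq|x||y|$, and using known information on element orders and commutator widths in $S$ one realises such a violating pair by coprime-order powers of $\d_k$-commutators. Were $G$ non-soluble it would possess a section of this type, and pulling the witnesses back through the lifting property would contradict $\t$; the care needed is that $S$ occurs as a subquotient, so one must localise the violation inside an honest subgroup of $G$ on which $\d_k$ is full. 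Hence $G$ is soluble and its Fitting series is defined.

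With $G$ soluble, recall $\d_k(G)=1$ if and only if the Fitting height of $G$ is at most $k$; nilpotency of $D:=\d_k(G)$ is equivalent to triviality of its nilpotent residual, so the target is a bound on the Fitting height by $k+1$. In the minimal counterexample $G$ has a \emph{unique} minimal normal subgroup $N$: two distinct minimal normals $N_1,N_2$ would give $D\hookrightarrow D/N_1\times D/N_2$ with both factors nilpotent, forcing $D$ nilpotent. Being a minimal normal subgroup of a soluble group, $N$ is an elementary abelian $p$-group, and $N\le D$ (otherwise $N\cap D=1$, whence $[N,D]=1$ and $D\cong DN/N$ would be nilpotent), while $D/N$ is nilpotent and $D$ is not. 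Therefore some $q$-element $t\in D$ with $q\neq p$ acts nontrivially on $N$. By coprime action $N=C_N(t)\times[N,t]$ with $C_{[N,t]}(t)=1$, so $[N,t]\langle t\rangle$ is a Frobenius group with kernel $[N,t]$; consequently, for any $1\neq m\in[N,t]$ the element $mt$ lies outside the kernel and is conjugate to a power of $t$, giving $|mt|=q$ while $|m|\,|t|=pq$. If $m$ and $t$ can be chosen to be coprime-order powers of $\d_k$-commutators, this contradicts $\t$, so no such $t$ exists and $D$ is nilpotent, contradicting the choice of $G$.

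The main obstacle is precisely this last point: guaranteeing that the order-violating witnesses $m\in[N,t]\le N\le \d_k(G)$ and the acting $q$-element $t$ are genuinely \emph{powers of} $\d_k$-commutators of coprime orders, so that $\t$ applies. The natural route is to read $m=[n,t]$ as a $\d_k$-commutator by realising $n$ and $t$ as powers of $\d_{k-1}$-commutators of coprime orders, which is exactly the defining shape of a $\d_k$-commutator; arranging this requires the coprime-commutator calculus together with the generation of $\d_k(G)$ and the coprimality bookkeeping of the coprime-action reductions (passage to $p$- and $p'$-parts, Hall and fixed-point arguments). I expect this realisation to be the delicate heart of the proof, and it is also where the hypothesis must be stated for powers of $\d_k$-commutators rather than for $\d_k$-commutators alone; once the contradiction in the minimal counterexample is secured, the induction on $|G|$ closes the argument.
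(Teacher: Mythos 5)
Your skeleton (easy direction, passage of the hypothesis $\t$ to quotients, minimal counterexample, separate soluble and non-soluble analyses) matches the paper's broad outline, but at both points where the theorem actually has content your argument defers to an unproven claim, and in both cases that claim is precisely the crux. The recurring error is a conflation of the \emph{subgroup} $\d_k(S)$ with the \emph{set} of (powers of) $\d_k$-commutators: from $\d_k(S)=S$ for a nonabelian simple group $S$ you cannot conclude that any particular element of $S$ --- in particular a Baumslag--Wiegold violating pair, or your Frobenius witnesses $m$ and $t$ --- is a power of a $\d_k$-commutator. Your solubility step rests on ``using known information on element orders and commutator widths one realises such a violating pair by coprime-order powers of $\d_k$-commutators,'' and your soluble case ends by admitting that realising $m$ and $t$ as such powers ``is the delicate heart of the proof.'' Neither step is supplied, and no mechanism in your sketch could supply them: the hypothesis $\t$ is vacuous unless one can exhibit honest $\d_k$-commutators at the right spots, and producing them is exactly what the paper's machinery exists for.

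Concretely, the paper fills these two gaps as follows. In the soluble case (Proposition~\ref{main3}) it does not use a minimal normal subgroup and a Frobenius configuration at all; instead it uses Turull's theorem that Fitting height $\geq k+2$ forces a tower $P_1\cdots P_{k+2}\cdots P_h$, together with Shumyatsky's Lemma~\ref{coprimetower}, which guarantees that the tower factor $P_{k+1}$ is \emph{generated by $\d_k$-commutators lying inside $P_{k+1}$}; Lemma~\ref{commfond} then makes $P_{k+2}$ centralize $P_{k+1}$, contradicting $[P_{k+2},P_{k+1}]=P_{k+2}$. This generation statement is the substitute for your missing ``realisation'' step. In the non-soluble case, the paper takes a minimal-order counterexample (minimality over quotients alone, as in your setup, is not enough: one needs all proper \emph{subgroups} soluble), shows it is quasisimple with $G/Z(G)$ a minimal simple group, and then invokes Proposition~\ref{invstar} --- a case-by-case application of Thompson's classification of minimal simple groups, built on Lemma~\ref{uuu} --- to show that every involution of $G/Z(G)$ is a $\d_n$-commutator for all $n$; lifting gives a power of a $\d_k$-commutator $a$ that is a $2$-element of order $2$ modulo $Z(G)$, and the contradiction comes not from an order-multiplicativity violation but from the fact that $a$ simultaneously inverts (since $a^2\in Z(G)$) and centralizes (by Lemma~\ref{commfond}) an odd-order commutator $[a,t]$ furnished by \cite[Theorem 3.8.2]{go}. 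Without Lemma~\ref{coprimetower}, Proposition~\ref{invstar}, or some replacement for them, your proposal does not close.
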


\section{Preliminaries}
As usual, if $\pi$ is a set of primes, we denote by $\pi'$ the set of all primes that do not belong to $\pi$. For a group $G$ we denote by $\pi(G)$ the set of primes dividing the order of $G$, and the maximal normal $\pi$-subgroup of $G$ is denoted by $O_{\pi}(G)$. For elements $x,y$ of a group $G$ we write $[x, \ _0y]=x$ and $[x,\ _{i+1}y]=[[x,\ _{i}y],y]$ for $i\geq0$.
We denote by $F(G)$ the Fitting subgroup of~$G$. 

Let $G$ be a soluble group. The {\it upper Fitting series} of $G$ is the series
\[
1=F_0(G) \leq F_1(G) \leq \cdots \leq F_t(G)=G
\]
where $F_1(G)=F(G)$ and $F_{i+1}(G)/F_i(G)=F(G/F_i)$ for every $i \geq 1$. 
Similarly, the  {\it lower Fitting series} of $G$ is the series 
\[
G=N_0(G) \geq N_1(G) \geq \cdots \geq N_t(G)=1
\] 
where $N_{i}(G)=\gi(N_{i-1}(G))$ for every $i \geq 1$. The number $t$ is called the {\it Fitting height} of the group and is denoted by $h(G)$.

Throughout the article we use without special references the well-known properties of coprime actions:  if $\alpha$ is an automorphism of a finite group $G$ of coprime order, $(|\alpha|,|G|)=1$, then $C_{G/N}(\alpha)=C_G(\alpha)N/N$ for any $\alpha$-invariant normal subgroup $N$, the equality $[G,\alpha]=[[G,\alpha],\alpha]$ holds, and if $G$ is in addition abelian, then $G=[G,\alpha]\times C_G(\alpha)$. Here $[G,\alpha]$ is the subgroup of $G$ generated by the elements of the form $g^{-1}g^\alpha$, where $g\in G$.

For convenience of the reader we state some results which will be used later.

\begin{lemma}[\cite{pavel}, Lemma 2.4]\label{uuu}
Let $G$ be a finite group and $y_1, \ldots, y_k$ be $\d_k$-commutators in $G$. Suppose the elements  $y_1, \ldots, y_k$  normalize a subgroup $N$ of $G$ such that $(|y_i|, |N|)=1$ for every $i=1,\ldots, k$. Then for every $x \in N$ the element $[x,  y_1, \ldots, y_k ]$ is a $\d_{k+1}$-commutator.
\end{lemma}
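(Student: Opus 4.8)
The plan is to induct on $k$, stripping off the outermost commutator $y_k$ at each step, so that the entire content of the lemma is pushed down to the base case $k=1$.

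Two preliminary observations make the inductive machinery run. First, the coprime commutators are nested: every $\d_k$-commutator is in particular a $\d_{k-1}$-commutator. This follows by an easy induction, since a power of a $\d_{k-1}$-commutator is a power of a $\d_{k-2}$-commutator, and $\d_0$-commutators are all elements of $G$. Second, since each $y_i$ normalizes $N$ and $x\in N$, every left-normed commutator $[x,y_1,\ldots,y_j]$ again lies in $N$; in particular its order divides $|N|$ and is therefore coprime to $|y_{j+1}|$ for every $j$.

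With these in hand the inductive step is immediate. Suppose the statement holds for $k-1$. Because $y_1,\ldots,y_{k-1}$ are $\d_k$-commutators, they are also $\d_{k-1}$-commutators normalizing $N$ with orders coprime to $|N|$; hence the inductive hypothesis yields that $u:=[x,y_1,\ldots,y_{k-1}]$ is a $\d_k$-commutator. Now $u\in N$, so $(|u|,|y_k|)=1$, while $y_k$ is itself a $\d_k$-commutator. Thus $u$ and $y_k$ are both powers of $\d_k$-commutators of coprime orders, and by the very definition of the coprime commutators $[u,y_k]=[x,y_1,\ldots,y_k]$ is a $\d_{k+1}$-commutator, as required.

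Everything therefore reduces to the base case $k=1$, which is where I expect the real difficulty to lie, and which carries the one ``extra'' level that distinguishes the true statement from the naive count (one commutation nominally raises the $\d$-level by one, so $k$ commutations would only reach $\d_k$; the passage to $\d_{k+1}$ must be extracted from the fact that the $y_i$ are genuine coprime commutators). Concretely, for a $\d_1$-commutator $y_1$ normalizing $N$ with $(|y_1|,|N|)=1$ and an arbitrary $x\in N$, I must show that $[x,y_1]$ is a $\d_2$-commutator. My plan is to analyse the coprime action of $\langle y_1\rangle$ on $N$: reducing to the case $N$ elementary abelian by a minimal-counterexample argument, I would use the decomposition $N=[N,y_1]\times C_N(y_1)$ and the invertibility of $y_1-1$ on $[N,y_1]$ to rewrite $[x,y_1]$ as $[v,y_1]$ with $v\in[N,y_1]$. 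The crux---and the step I expect to be genuinely delicate---is to arrange that $v$ be a power of a single $\d_1$-commutator (rather than merely an element of the subgroup $[N,y_1]$, which is only a product of powers of several coprime commutators); exploiting that $[x,y_1]$ is itself a $\d_1$-commutator and that $y_1$ is a $\d_1$-commutator of coprime order, together with the module structure over $\langle y_1\rangle$, should furnish the required single generator and close the argument.
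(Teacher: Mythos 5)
You should note at the outset that the paper contains no proof of this statement: it is quoted, with attribution, from Shumyatsky's paper (reference [13] = \emph{Commutators of elements of coprime orders in finite groups}, Lemma 2.4 there), so your argument has to stand entirely on its own. It does not, and the gap is exactly where you say you expect it. Your preliminary observations are correct (every $\d_k$-commutator is a $\d_{k-1}$-commutator, and the iterated commutators $[x,y_1,\ldots,y_j]$ remain in $N$, hence have order coprime to each $|y_i|$), and the inductive step built on them is valid: if the lemma holds for $k-1$, then $u=[x,y_1,\ldots,y_{k-1}]$ is a $\d_k$-commutator lying in $N$, and since $u$ and $y_k$ are both (first) powers of $\d_k$-commutators of coprime orders, $[u,y_k]$ is a $\d_{k+1}$-commutator by definition. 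But this step is the routine part. As you observe yourself, the naive count yields only a $\d_k$-commutator after $k$ commutations; the entire content of the lemma, the gain of one extra level, is concentrated in the case $k=1$: if $y$ is a $\d_1$-commutator acting coprimely on $N$, then $[x,y]$ is a $\d_2$-commutator and not merely a $\d_1$-commutator. For this you offer only a plan whose crux you explicitly flag as unresolved, so the statement is not proved.

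Moreover, the plan itself cannot be carried out as described. The proposed reduction ``to the case $N$ elementary abelian by a minimal-counterexample argument'' is not available: the conclusion asserts that a specific element of the ambient group $G$ is a $\d_2$-commutator \emph{of $G$}, and this property does not pass through quotients of $N$ (a quotient $N/M$ is not a section of $G$ in any way that preserves the conclusion). The only clean reduction is the one you indicate via $N=[N,y]C_N(y)$ and $[vc,y]=[v^c,y]$ for $c\in C_N(y)$, which replaces $N$ by $[N,y]$; it leaves untouched precisely the case $N=[N,y]$ with $N$ non-abelian, where your tools (the direct decomposition and the invertibility of $y-1$, which presuppose a module, i.e.\ an abelian group) no longer apply. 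In that case the fact one would naturally reach for is false: for non-abelian $N$ it is not true that every element of $[N,y]$ is a commutator $[n,y^m]$. For instance, let $N=Q_8$ and let $y$ have order $3$ and cycle $i\to j\to k$: then $[N,y]=Q_8$, but $-1$ is not of the form $[n,y^{\pm1}]$ for any $n\in N$, since each of these two commutator maps has image of size four and together they cover only $Q_8\setminus\{-1\}$. By contrast, in the abelian case there is no crux at all: the map $n\mapsto[n,y]$ is an endomorphism of $N$ restricting to a surjection of $[N,y]$ onto itself, so $[x,y]=[v,y]$ with $v=[w,y]$ itself a $\d_1$-commutator, and the base case follows at once. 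In short, your proposal settles exactly the case in which the lemma is easy and leaves open the case that carries the difficulty; what is missing is the actual argument of Shumyatsky's Lemma 2.4, which is the reason this lemma appears in the paper as a citation rather than with a proof.
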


Let us call a subgroup $H$ of a group $G$ a tower of height $h$ if $H$ can be written as a product $H=P_1\cdots P_h$, where

\begin{enumerate}
\item $P_i$ is a $p_i$-group ($p_i$ a prime) for $i=1,\dots,h$.
\item $P_i$ normalizes $P_j$ for $i<j$.
\item $[P_i,P_{i-1}]=P_i$ for $i=2,\dots,h$.
\end{enumerate}
It follows from (3) that $p_i\neq p_{i+1}$ for $i=1,\dots,h-1$. In \cite{turull}, it was proved that a finite soluble group $G$ has Fitting height at least $h$ if and only if $G$ possesses a tower of height $h$.

\begin{lemma}[\cite{pavel}, Lemma 2.6]\label{coprimetower}
Let $P_1 \cdots P_h$ be a tower of height $h$. For every $1 \leq i \leq h$ the subgroup $P_i$ is generated by $\d_{i-1}$-commutators contained in $P_i$.
\end{lemma}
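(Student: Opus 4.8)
The plan is to prove a slightly stronger, two-parameter statement by a double induction, and to reduce the general case to the case of the top term of a tower. First observe that for each $i$ the product $P_1\cdots P_i$ is itself a tower, now of height $i$ (conditions (1)--(3) are inherited), whose top term is $P_i$; hence it suffices to show that the top term $P_h$ of an arbitrary tower of height $h$ in a group $G$ is generated by $\d_{h-1}$-commutators of $G$ contained in $P_h$, and then to apply this to the sub-towers $P_1\cdots P_i$. I would also record the elementary fact, proved by induction on $j$ directly from the definition, that the set of $\d_j$-commutators of $G$ is closed under conjugation: if $g=[a,b]$ with $a,b$ powers of $\d_{j-1}$-commutators of coprime orders, then $g^t=[a^t,b^t]$ has the same form, since conjugates of $\d_{j-1}$-commutators are $\d_{j-1}$-commutators by the inductive hypothesis and orders are preserved.

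The statement I would actually induct on is: for every tower $P_1\cdots P_h$ in $G$ and every $0\le j\le h-1$, the subgroup $P_h$ is generated by $\d_j$-commutators contained in $P_h$; call this $C(h,j)$. The desired conclusion is $C(h,h-1)$. I would run an outer induction on $h$ and, for fixed $h$, an inner induction on $j$. The base cases are immediate: $C(h,0)$ holds because every element of $P_h$ is a $\d_0$-commutator, and the case $h=1$ only involves $j=0$.

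For the inductive step ($h\ge 2$, $1\le j\le h-1$) I would exploit the defining relation $P_h=[P_h,P_{h-1}]$. By the inner hypothesis $C(h,j-1)$ the group $P_h$ is generated by $\d_{j-1}$-commutators $a_\lambda\in P_h$; and since $P_1\cdots P_{h-1}$ is a tower of height $h-1$ in the same $G$, the outer hypothesis supplies $C(h-1,j-1)$, so $P_{h-1}$ is generated by $\d_{j-1}$-commutators $b_\mu\in P_{h-1}$. Now $[P_h,P_{h-1}]$ is the normal closure in $\langle P_h,P_{h-1}\rangle$ of the commutators $[a_\lambda,b_\mu]$, and because $P_{h-1}$ normalises $P_h$ the subgroup $P_h$ is normal in $\langle P_h,P_{h-1}\rangle$, so all the relevant conjugates remain inside $P_h$. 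Each $[a_\lambda,b_\mu]$ is a $\d_j$-commutator: both entries are $\d_{j-1}$-commutators, hence powers of such, and $(|a_\lambda|,|b_\mu|)=1$ because $a_\lambda$ is a $p_h$-element, $b_\mu$ a $p_{h-1}$-element, and $p_{h-1}\ne p_h$. By the conjugation-closedness recorded above, the conjugates are again $\d_j$-commutators, and they all lie in $P_h$; therefore $P_h=[P_h,P_{h-1}]$ is generated by $\d_j$-commutators contained in $P_h$, which is exactly $C(h,j)$.

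The point that makes a naive single induction fail, and hence the main obstacle, is that a $\d_j$-commutator must be a commutator of two powers of $\d_{j-1}$-commutators: it does not suffice that the acting group $P_{h-1}$ be generated by low-level coprime commutators, one also needs the \emph{left} entry, an element of $P_h$ itself, to be a $\d_{j-1}$-commutator. This self-referential demand on $P_h$ is precisely what forces the auxiliary parameter $j$ and the inner induction, since an induction on the height alone cannot provide it. The remaining ingredients—the description of $[P_h,P_{h-1}]$ as the normal closure of the $[a_\lambda,b_\mu]$ and the coprimality $p_{h-1}\ne p_h$ guaranteed by the tower axioms—are routine.
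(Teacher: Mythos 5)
Your proof is correct, but note that the paper contains no proof of this statement to compare against: the lemma is imported wholesale from \cite{pavel} (Lemma 2.6 there), so your argument is necessarily a different route, and the right comparison is with the cited source. The difficulty you isolate is exactly the right one: for $[x,y]$ to be a $\d_{j}$-commutator, the left entry $x\in P_h$ must itself be a power of a $\d_{j-1}$-commutator, and an induction on the height alone never supplies this. Shumyatsky's way around it is the iterated-commutator lemma quoted in this paper as Lemma \ref{uuu}, which makes arbitrary left entries admissible at the cost of commutating repeatedly with $\d_k$-commutators of coprime order; the tower lemma is then obtained with its help by induction on $i$. Your proof replaces that machinery with the inner induction on the depth $j$: since $P_h$ is already generated by $\d_{j-1}$-commutators, those generators can serve as left entries, and the step is completed by two standard facts that you state and use correctly — that $[A,B]$ is the normal closure in $\langle A,B\rangle$ of the commutators of generators, and that the set of $\d_j$-commutators is closed under conjugation (your induction for this is fine) — with coprimality of orders coming from $p_{h-1}\neq p_h$ and with $P_h\trianglelefteq P_hP_{h-1}$ keeping all the relevant conjugates inside $P_h$; the bookkeeping of the double induction ($C(h,j)$ needing $C(h,j-1)$ and $C(h-1,j-1)$, with $j\le h-1$ exactly matching the range where the outer hypothesis applies) is sound. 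What your approach buys is a self-contained, purely definitional proof that avoids Lemma \ref{uuu} altogether; what the paper's citation buys is economy, since Lemma \ref{uuu} is quoted and needed elsewhere in the paper anyway (in Lemma \ref{commfond} and Proposition \ref{invstar}), so the authors get the tower lemma from the same source at no extra cost.
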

The following theorem shows how $\d_k$-commutators control the solubility of a finite group.

\begin{theorem}[\cite{pavel}, Theorem 2.7]\label{fittingdelta}
Let $G$ be a finite group and let $k$ be a positive integer. Then $\d_k(G)=1$ if and only if $G$ is soluble with Fitting height at most $k$.
\end{theorem}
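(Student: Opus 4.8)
The plan is to prove the two implications separately, isolating the real difficulty in the reverse direction. For the ``if'' direction I would first record the inclusion $\d_k(G)\le N_k(G)$ into the $k$-th term of the lower Fitting series, by induction on $k$. The base is $\d_0(G)=G=N_0(G)$. For the step, assume $\d_{k-1}(G)\le N_{k-1}(G)$ and take a $\d_k$-commutator $[a,b]$, where $a,b$ are powers of $\d_{k-1}$-commutators with $(|a|,|b|)=1$; then $a,b\in\d_{k-1}(G)\le N_{k-1}(G)$, and since $N_{k-1}(G)/N_k(G)=N_{k-1}(G)/\gi(N_{k-1}(G))$ is nilpotent, the images of $a$ and $b$ have coprime orders and therefore commute, so $[a,b]\in N_k(G)$. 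As $N_k(G)$ is a subgroup, $\d_k(G)\le N_k(G)$. Now if $G$ is soluble with $h(G)\le k$, the lower Fitting series reaches $1$ by the $k$-th step, i.e.\ $N_k(G)=1$, whence $\d_k(G)=1$.

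For the ``only if'' direction I would argue by contraposition: assuming $G$ is \emph{not} soluble with Fitting height at most $k$, I produce a nontrivial $\d_k$-commutator, in two cases. If $G$ is soluble but $h(G)\ge k+1$, then by \cite{turull} the group $G$ contains a tower $P_1\cdots P_{k+1}$ of height $k+1$; by Lemma~\ref{coprimetower} the nontrivial factor $P_{k+1}$ is generated by $\d_k$-commutators lying in $P_{k+1}$, so at least one of them is nontrivial and $\d_k(G)\neq 1$.

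The insoluble case I would handle by a minimal counterexample on $|G|$. Let $G$ be insoluble with $\d_k(G)=1$ of least order, and let $M$ be a minimal normal subgroup. Using the remark that every $\d_k$-commutator of $G/M$ lifts to a $\d_k$-commutator of $G$, one gets $\d_k(G/M)=1$, so by minimality $G/M$ is soluble; hence $M$ is insoluble and $M\cong S^m$ for a nonabelian simple group $S$. Since $\d_k$-commutators computed inside a subgroup remain $\d_k$-commutators of the ambient group, one has $\d_k(S)\le\d_k(M)\le\d_k(G)$, and the theorem reduces to showing that $\d_k(S)=S$ for every nonabelian simple $S$ and every $k$. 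I would prove this by induction. Each $\d_k(S)$ is normal (the set of $\d_k$-commutators is conjugation-closed), hence equals $1$ or $S$. For $k=1$, if $\d_1(S)=1$ then every coprime-order pair commutes, so $|ab|=|a||b|$ for all such pairs, and Baumslag--Wiegold (Theorem~\ref{teobau}) forces $S$ nilpotent, a contradiction. For the step, assume $\d_{k-1}(S)=S$ and suppose $\d_k(S)=1$; writing $Y$ for the conjugation- and power-closed set of powers of $\d_{k-1}$-commutators, which generates $S$, the hypothesis says all coprime pairs in $Y$ commute. Grouping the primary elements of $Y$ by prime, $S=\prod_p A_p$ with $A_p=\langle p\text{-elements of }Y\rangle$ pairwise commuting normal subgroups, and simplicity forces all but one $A_p$ to be trivial, i.e.\ every $\d_{k-1}$-commutator is a $p$-element for a single prime $p$.

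Ruling out this last scenario is \emph{the} main obstacle, and it is not vacuous: if all $\d_{k-1}$-commutators were $p$-elements, then $Y$ would contain no nontrivial coprime pair and $\d_k(S)=1$ would hold automatically, so the whole statement hinges on excluding it. I would do so by strengthening the induction to carry the auxiliary assertion that the orders of the $\d_{k-1}$-commutators of $S$ involve at least two distinct primes (ideally every prime of $\pi(S)$), the base case coming from the explicit coprime pairs as in the $k=1$ step. Propagating this ``spread of primes'' through the inductive step — showing that passing from $\d_{k-1}$- to $\d_k$-commutators does not concentrate all element orders on a single prime — is where the genuine work lies and is likely to require structural input on how coprime commutators populate the prime spectrum of a nonabelian simple group; Lemma~\ref{uuu}, which manufactures higher coprime commutators out of coprime actions, is the natural tool to force the required primary elements to occur.
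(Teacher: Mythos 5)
Your ``if'' direction and the soluble half of the ``only if'' direction are correct and are exactly the standard argument: the induction showing $\d_k(G)\le N_k(G)$ (coprime-order elements commute in the nilpotent quotients $N_{k-1}(G)/N_k(G)$) gives $\d_k(G)=1$ when $h(G)\le k$, and the tower of height $k+1$ from \cite{turull} together with Lemma~\ref{coprimetower} gives a nontrivial $\d_k$-commutator when $G$ is soluble with $h(G)\ge k+1$. Note that the paper itself does not prove this statement; it quotes it as \cite{pavel}, Theorem 2.7, and the entire substance of that theorem is concentrated in the one step you explicitly leave open.

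That step is a genuine gap, and as you yourself observe it is not a technicality: after reducing to a nonabelian simple group $S$ in which every $\d_{k-1}$-commutator is a $p$-element for a single prime $p$, the condition $\d_k(S)=1$ holds vacuously, so the theorem is exactly equivalent to excluding this configuration, and your proposed ``spread of primes'' strengthened induction over arbitrary nonabelian simple groups is a hope rather than an argument. The configuration is excluded precisely by Theorem~\ref{prime} (\cite{pavel}, Theorem 2.5), which is stated in the paper's preliminaries and is proved in \cite{pavel} \emph{before} Theorem 2.7, so there is no circularity: applying it to $S$ with $\pi=\{p\}$ and $k-1$ in place of $k$ (legitimate since $k\ge 2$ in your inductive step; your $k=1$ base via Theorem~\ref{teobau} stands) forces $S$ to be soluble, a contradiction. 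Alternatively, notice that your minimal counterexample is in fact minimal nonsoluble --- since $\d_k$-commutators of a subgroup are $\d_k$-commutators of $G$, every proper subgroup $H$ satisfies $\d_k(H)=1$ and is soluble by minimality --- so $G/R$ ($R$ the soluble radical) is a minimal simple group in Thompson's classification (Theorem~\ref{thomClass}), and Proposition~\ref{invstar} produces an involution of $G/R$ that is a $\d_k$-commutator; lifting it via the remark on quotients yields a nontrivial $\d_k$-commutator of $G$, the desired contradiction. Both repairs draw on the structure of minimal simple groups, which your reduction to an arbitrary minimal normal subgroup $S^m$ forgoes; that loss of structural control is exactly why your induction stalls where it does.
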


As an immediate consequence of Theorem~\ref{fittingdelta}, the next result shows the relationship between the terms of the lower Fitting series and  $\d_k$-commutators.

\begin{lemma}\label{deltagen}
If $G$ is a finite group, then the $k$th term $N_k(G)$ of the lower Fitting series of $G$ equals $\d_{k}(G)$ for every $k \geq 0$.
\end{lemma}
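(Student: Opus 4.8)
The plan is to deduce the equality $N_k(G) = \d_k(G)$ from Theorem~\ref{fittingdelta} by a double inclusion, exploiting that both operators $N_k(\cdot)$ and $\d_k(\cdot)$ are compatible with passage to quotients. First I would record the two quotient identities. Since $\d_k(G)$ is generated by $\d_k$-commutators and, by the lifting property noted in the introduction, every $\d_k$-commutator of $G/N$ lifts to a $\d_k$-commutator of $G$, one obtains $\d_k(G/N) = \d_k(G)N/N$ for every normal subgroup $N$ of $G$; and since the nilpotent residual commutes with quotients, $\gi(G/N) = \gi(G)N/N$, iterating the definition $N_i = \gi(N_{i-1})$ yields $N_k(G/N) = N_k(G)N/N$ as well.

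For the inclusion $\d_k(G) \leq N_k(G)$, I would observe that each factor $N_i(G)/N_{i+1}(G)$ of the lower Fitting series is nilpotent by definition, so $G/N_k(G)$ admits a normal series of length $k$ with nilpotent factors and is therefore soluble of Fitting height at most $k$. Theorem~\ref{fittingdelta} then gives $\d_k(G/N_k(G)) = 1$, and by the first quotient identity this reads $\d_k(G)N_k(G)/N_k(G) = 1$, that is, $\d_k(G) \leq N_k(G)$.

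For the reverse inclusion, I would set $M = \d_k(G)$ and note that the quotient identity for $\d_k$ gives $\d_k(G/M) = \d_k(G)M/M = 1$. By Theorem~\ref{fittingdelta}, $G/M$ is then soluble with Fitting height at most $k$, so its lower Fitting series reaches the trivial subgroup in at most $k$ steps, whence $N_k(G/M) = 1$. Since $M \leq N_k(G)$ by the previous paragraph, the quotient identity for $N_k$ yields $N_k(G)/M = N_k(G/M) = 1$, hence $N_k(G) \leq M = \d_k(G)$.

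The two inclusions give $N_k(G) = \d_k(G)$, the base case $k=0$ being simply $N_0(G) = G = \d_0(G)$. No single step is a genuine obstacle, as the real content is carried entirely by Theorem~\ref{fittingdelta}; the point deserving care is the verification of the two quotient identities, and in particular confirming that the componentwise lift of $\d_k$-commutators along $G \to G/N$ upgrades to the subgroup equality $\d_k(G/N) = \d_k(G)N/N$ in both directions.
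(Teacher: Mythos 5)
Your proof is correct and follows exactly the route the paper intends: the paper gives no written proof of Lemma~\ref{deltagen}, presenting it as an immediate consequence of Theorem~\ref{fittingdelta}, and your double-inclusion argument via the two quotient identities (lifting/projecting $\d_k$-commutators, and $\gi$ commuting with quotients iterated along the lower Fitting series) is precisely the natural way to carry out that deduction, including the correct handling of the trivial case $k=0$.
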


We conclude this section with a result obtained in \cite{pavel} which generalizes the famous Burnside $p^{\alpha}q^{\beta}$-Theorem (see \cite[Theorem 4.3.3]{go}).

\begin{theorem}[\cite{pavel}, Theorem 2.5]\label{prime} Let $k$ be a positive integer, let $\pi$ be a set consisting of at most two primes and let $G$ be a finite group in which all $\d_k$-commutators are $\pi$-elements. Then $G$ is soluble and $\d_k(G) \leq O_{\pi}(G)$.
\end{theorem}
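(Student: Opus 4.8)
The plan is to reduce the entire statement to the single assertion that $\d_k(G)$ is a $\pi$-group, and then to prove that assertion by a minimal-counterexample argument. For the reduction, suppose it is already known that $\d_k(G)$ is a $\pi$-group. Since a conjugate of a $\d_k$-commutator is again a $\d_k$-commutator, $\d_k(G)$ is a normal $\pi$-subgroup, so $\d_k(G)\le O_\pi(G)$, which is one of the two conclusions. Moreover a $\pi$-group with $|\pi|\le 2$ has order $p^{\alpha}q^{\beta}$ and hence is soluble by Burnside's $p^{\alpha}q^{\beta}$-theorem; and $\d_k(G/\d_k(G))=1$ by the lifting property for $\d_k$-commutators recorded in the introduction, so $G/\d_k(G)$ is soluble by Theorem~\ref{fittingdelta}. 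As an extension of a soluble group by a soluble group, $G$ is then soluble. Thus both conclusions follow once $\d_k(G)$ is shown to be a $\pi$-group, and it is precisely here that the bound $|\pi|\le 2$ enters, through Burnside's theorem.

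It remains to prove that $\d_k(G)$ is a $\pi$-group, which I would attempt by induction on $|G|$ with $G$ a least counterexample. The hypothesis is inherited by every section of $G$: it passes to subgroups because a $\d_k$-commutator of a subgroup is a $\d_k$-commutator of $G$, and to quotients by the same lifting property; so by minimality every proper section of $G$ is soluble. When $G$ itself is soluble I would pass to $\bar G=G/O_\pi(G)$, where $O_\pi(\bar G)=1$ and $F(\bar G)$ is a $\pi'$-group, and try to show $\d_k(\bar G)=1$. If not, then by Theorem~\ref{fittingdelta} the soluble group $\bar G$ has Fitting height at least $k+1$, so by the tower characterisation of Fitting height (\cite{turull}) it contains a tower $P_1\cdots P_{k+1}$ of height $k+1$; by Lemma~\ref{coprimetower} its top term $P_{k+1}$ is generated by $\d_k$-commutators, which are $\pi$-elements, and being a nontrivial $p_{k+1}$-group this forces $p_{k+1}\in\pi$. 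One then feeds in coprime action: a $\pi$-element acts coprimely on the $\pi'$-group $F(\bar G)$, and one uses this together with $|\pi|\le 2$ to produce a nontrivial normal $\pi$-subgroup of $\bar G$, against $O_\pi(\bar G)=1$. The mechanism is transparent for $k=1$: if $g\in F(\bar G)$ and $x$ is a $\pi$-element, then $[g,x]$ is a $\d_1$-commutator lying in the $\pi'$-group $F(\bar G)$ and simultaneously a $\pi$-element, hence trivial; so every $\pi$-element lies in $C_{\bar G}(F(\bar G))\le F(\bar G)$ and is therefore trivial, forcing $\bar G$ to be a $\pi'$-group and $\d_1(\bar G)=1$.

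The main obstacle is the non-soluble case. A least counterexample $G$ that is not soluble has, by the above, all proper sections soluble, hence a unique minimal normal subgroup $N=S_1\times\cdots\times S_t$ with each $S_i\cong S$ a non-abelian simple group. Restricting to $S$, every $\d_k$-commutator of $S$ is a $\d_k$-commutator of $G$ and hence a $\pi$-element, while $\d_k(S)=S$ by Theorem~\ref{fittingdelta} and simplicity; thus $S$ is a non-abelian simple group all of whose $\d_k$-commutators are confined to at most two primes. Since $|\pi(S)|\ge 3$ by Burnside, a contradiction will follow once one shows that every non-abelian simple group contains a $\d_k$-commutator whose order is divisible by a third prime. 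I expect this to be the crux, and to require the classification of finite simple groups: in each family one would select elements of suitable coprime orders whose coprime commutator realises a prime outside any prescribed two-element set. This is the genuinely new ingredient beyond Burnside's $p^{\alpha}q^{\beta}$-theorem that the statement advertises.
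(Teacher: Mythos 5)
A framing remark first: this paper never proves Theorem~\ref{prime} --- it is imported verbatim from \cite[Theorem 2.5]{pavel} --- so your attempt has to be measured against Shumyatsky's argument and against the tools this paper quotes from it (Lemma~\ref{uuu}, Theorem~\ref{thomClass}, Proposition~\ref{invstar}). Your architecture (reduce everything to ``$\d_k(G)$ is a $\pi$-group'', then a minimal counterexample with hypothesis inherited by subgroups and quotients) is sound, and the reduction itself is correct. But both halves of the induction have genuine gaps.

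In the soluble case you prove the claim only for $k=1$; for $k\ge 2$ the tower sketch does not close. Knowing $p_{k+1}\in\pi$ gives a nontrivial $\pi$-subgroup $P_{k+1}$, but it is not normal, and no mechanism is offered for converting it into a nontrivial \emph{normal} $\pi$-subgroup --- ``one feeds in coprime action'' is precisely the missing step. What does close it is your own $k=1$ mechanism pushed up by Lemma~\ref{uuu}: if $x$ is a $\d_k$-commutator (hence a $\pi$-element) and $g\in F(\bar G)$ (a $\pi'$-group, as you observe), then $[g,{}_k\, x]$ is a $\d_{k+1}$-commutator, hence a $\d_k$-commutator (an easy induction shows every $\d_{j+1}$-commutator is a $\d_j$-commutator), hence a $\pi$-element lying in the $\pi'$-group $F(\bar G)$, hence trivial. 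Thus $x$ is a left Engel element of $F(\bar G)\langle x\rangle$, so $x$ lies in the Fitting subgroup of that group by Baer's theorem \cite[12.3.7]{rob} (the same step this paper uses in Theorem A), and coprimality then forces $[F(\bar G),x]=1$; finally $x\in C_{\bar G}(F(\bar G))\le F(\bar G)$, so $x=1$ and $\d_k(\bar G)=1$. Note also that, contrary to your remark, $|\pi|\le 2$ plays no role in the soluble case; it is needed only in your Burnside reduction and in the nonsoluble case.

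The more serious gap is that the crux of the nonsoluble case is deferred rather than proved: you state as an expectation the assertion that a nonabelian simple group cannot have all its $\d_k$-commutators confined to two primes. Two points. First, your reduction gives more than you use: if every proper subgroup and quotient of the minimal counterexample $G$ is soluble while $G$ is not, then $G$ has no nontrivial proper normal subgroup (it would be soluble-by-soluble), so $G$ is itself simple with all proper subgroups soluble, i.e.\ a \emph{minimal simple} group. Hence the classification you need is Thompson's (Theorem~\ref{thomClass}), not the full classification of finite simple groups; your appeal to CFSG is an overshoot caused by missing this. Second, even granting the classification, the case-by-case construction is the real content of the theorem, and it is exactly what the technique of Proposition~\ref{invstar} supplies: in each of Thompson's five families one exhibits a Frobenius subgroup $A\rtimes T$ with $C_A(T)=1$ and $T$ cyclic with strongly real generator, and feeding these data through Lemma~\ref{uuu} shows that all elements of $A$, all elements of $T$, and the elements of a second torus are $\d_k$-commutators for every $k$; in $\Psl(2,2^p)$, for instance, this yields $\d_k$-commutators of order $2$, of order dividing $2^p-1$, and of order dividing $2^p+1$, realizing three distinct primes and giving the contradiction. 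Without carrying out this analysis (or a substitute), your argument establishes only the soluble case, which is precisely the part of the theorem that does not go beyond Burnside.
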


\section{Proof of Theorem A}

We start with a  proposition which clarifies the connection between $\g_k(G)$ and the terms of the lower central series of a finite group $G$. 

\begin{proposition}\label{gammainfk}
If $G$ is a finite group, then $\g_k(G)=\gi(G)$ for every $k \geq 2$.
\end{proposition}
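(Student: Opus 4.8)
The plan is to prove the two inclusions $\g_k(G)\le\gi(G)$ and $\gi(G)\le\g_k(G)$ separately, relying on the nilpotency criterion recorded in the introduction and due to Shumyatsky: for $k\ge 2$ a finite group $H$ is nilpotent if and only if $\g_k(H)=1$. Since $\gi(H)=1$ likewise characterizes nilpotency, both subgroups encode the failure of $G$ to be nilpotent, and the task is to convert this heuristic into honest containments by passing to appropriate quotients. I will use throughout that $\g_k(G)$ is characteristic (the set of $\g_k$-commutators is invariant under automorphisms) and that $\gi(G)$ is the smallest normal subgroup of $G$ with nilpotent quotient.

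For $\g_k(G)\le\gi(G)$, I would first record the elementary fact that a surjective homomorphism $G\to\bar G$ carries $\g_k$-commutators to $\g_k$-commutators. This follows by induction on $k$: the order of the image of an element divides the order of the element, so the coprimality condition $(|a|,|b|)=1$ is inherited, and a power of a $\g_{k-1}$-commutator maps to a power of a $\g_{k-1}$-commutator. Taking $\bar G=G/\gi(G)$, which is nilpotent, Shumyatsky's criterion gives $\g_k(\bar G)=1$, so every $\g_k$-commutator of $G$ maps to the identity and hence lies in $\gi(G)$; as such commutators generate $\g_k(G)$, the inclusion follows.

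For the reverse inclusion $\gi(G)\le\g_k(G)$, I would apply the lifting property stated in the introduction to $N=\g_k(G)$: any $\g_k$-commutator of $G/\g_k(G)$ is the image of a $\g_k$-commutator of $G$, which already belongs to $\g_k(G)$ and is thus trivial in the quotient. Hence $\g_k(G/\g_k(G))=1$, so by Shumyatsky's criterion $G/\g_k(G)$ is nilpotent, and the minimality of the nilpotent residual forces $\gi(G)\le\g_k(G)$. Together with the first inclusion this gives the desired equality.

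I do not anticipate a serious obstacle; the argument is formal once the nilpotency criterion is in hand. The only point requiring genuine care is the behaviour of $\g_k$-commutators under homomorphisms and under lifting, namely verifying that both the coprimality constraint and the ``power of a $\g_{k-1}$-commutator'' structure are preserved, which is exactly why the induction in the second paragraph is needed.
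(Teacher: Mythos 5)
Your proof is correct, but it takes a genuinely different route from the paper's. You deduce the equality formally from Shumyatsky's criterion quoted in the introduction ($\g_k(G)=1$ if and only if $G$ is nilpotent), together with two structural facts about coprime commutators: that surjective homomorphisms send $\g_k$-commutators to $\g_k$-commutators (your induction establishing this is sound), and the lifting property modulo a normal subgroup, which the paper records in the introduction. Applying the criterion to the nilpotent quotient $G/\gi(G)$ gives $\g_k(G)\le\gi(G)$, and applying it to $G/\g_k(G)$, where all $\g_k$-commutators are trivial by the lifting property, gives the reverse inclusion. The paper instead argues by induction on $k$ and is essentially self-contained: for $k=2$ it proves both inclusions directly ($[a,b]\in\gi(G)$ for coprime $a,b$ because $G/\gi(G)$ is nilpotent; conversely $G/\g_2(G)$ is nilpotent because its $p$-elements and $q$-elements commute for $p\neq q$), and the inductive step rests on the identity $\gi(G)=[\gi(G),G]$ plus the fact that $[\gi(G),G]$ is generated by the coprime commutators $[a,b]$ with $a\in\gi(G)$, $b\in G$. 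The trade-off is clear: your argument is shorter and more modular, but the inclusion $\gi(G)\le\g_k(G)$ consumes the nontrivial direction of Shumyatsky's theorem for arbitrary $k\ge 2$, whereas the paper only needs the elementary $k=2$ nilpotency argument and bootstraps from there. One caveat worth noting: your derivation is legitimate only because this paper imports the criterion from \cite{pavel} as a black box; if that criterion were itself obtained by first establishing $\g_k(G)=\gi(G)$, your argument would be circular at the level of the literature, which is presumably one reason the authors give a self-contained proof.
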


\begin{proof}
We argue by induction on $k$. Let $k=2$.
Since $G/\gi(G)$ is nilpotent, $[a,b] \in \gi(G)$ for every $a,b \in G$ of coprime orders. Hence $\g_2(G) \leq \gi(G)$. On the other hand, if $p,q \in \pi(G)$ with $p \neq q$, then $p$-elements and $q$-elements of $G/\g_2(G)$ commute. Therefore $G/\g_2(G)$ is nilpotent and $\gi(G) \leq \g_2(G)$.

Now, notice that $\gi(G)=[\gi(G),G]$ and that with a similar argument as before, it can be proved that 
\[[\gi(G),G]= \langle [a,b] \ | \ a \in \gi(G), b \in G, (|a|,|b|)=1 \rangle.\]
If $k \geq 2$, it follows that 
\begin{align*}
\g_{k+1}(G)&=\langle [a,b] \ | \ a \in \g_k(G), b \in G, (|a|,|b|)=1 \rangle \\
&= \langle [a,b] \ | \ a \in \gi(G), b \in G, (|a|,|b|)=1 \rangle \\
&= \gi(G),
\end{align*}
and we are done.
\end{proof}

The next lemma shows the role played by the condition imposed on the orders of $\g_k$-commutators.

\begin{lemma}\label{ddd}
Let $k\geq 2$ and let $G$ be a finite group in which $|ab|=|a||b|$ whenever $a$ and $b$ are $\gamma_k^*$-commutators of coprime orders. Assume that $N$ is a subgroup of $G$ normalized by  an element $x$ such that $(|N|,|x|)=1$. Then $[N, x]=1$.
\end{lemma}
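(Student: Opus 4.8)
The plan is to argue by contradiction, extracting from the assumption $[N,x]\neq 1$ a pair of $\g_k$-commutators of coprime orders whose product violates $|uv|=|u||v|$. First I would reduce $x$ to prime-power order: writing $x=\prod_i x_i$ as the product of its pairwise commuting primary components, each $x_i$ is a power of $x$, hence normalizes $N$ and satisfies $(|N|,|x_i|)=1$, and $x$ centralizes $N$ if and only if every $x_i$ does. So I may assume $x$ is a $p$-element for a single prime $p$, whence $N$ is a $p'$-group. Taking $N$ of minimal order subject to $[N,x]\neq 1$, the coprime-action identity $[[N,x],x]=[N,x]$ forces $N=[N,x]$, so I may also assume $N=[N,x]\neq 1$.

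The basic supply of $\g_k$-commutators comes from using $x$ \emph{only} as the iterated outer entry of a left-normed commutator, which is legitimate for an arbitrary element $x$ and requires no assumption that $x$ itself be a $\g_{k-1}$- or $\g_k$-commutator. Concretely, for every $n\in N$ the element $[n,\,_{k-1}x]$ is a $\g_k$-commutator: inducting on the number of occurrences of $x$, each partial commutator $[n,\,_{j}x]$ lies in $N$, a $p'$-group, hence is coprime to the $p$-element $x$ and may serve as the seed (a power of a $\g_{j+1}$-commutator) for the next bracket, exactly as required by the definition of $\g_{j+2}$-commutators. By coprime action $[N,\,_{k-1}x]=[N,x]=N$, so these $\g_k$-commutators, all lying in the $p'$-group $N$, generate $[N,x]$. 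Since $H:=N\langle x\rangle\leq G$ and the defining process of $\g_j$-commutators is inherited by subgroups, $H$ also satisfies the hypothesis, so it suffices to work inside $H$.

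The final and hardest step is to convert the nontrivial action of $x$ into an actual order violation. Because $N$ is a $p'$-group, the $\g_k$-commutators constructed above can never by themselves form a coprime pair that feels the $p$-direction, so the crux is to use the coprime action of $x$ on $N$ to locate a prime $q\neq p$ on which $x$ acts nontrivially and to exhibit $\g_k$-commutators $u,v$ with $(|u|,|v|)=1$ and $|uv|<|u||v|$. I would single out an $x$-invariant Sylow (or Fitting) constituent of $N$ on which $x$ acts nontrivially, analyse the $\{p,q\}$-subgroup it generates together with $\langle x\rangle$, and attempt to produce the witnessing pair there; by Baumslag--Wiegold (Theorem~\ref{teobau}) the existence of such a pair is equivalent to the non-nilpotency of that subgroup, which is exactly what $[N,x]\neq 1$ provides. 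I expect essentially all the difficulty to concentrate in this $p$-singular case---the point at which the action of the a priori arbitrary element $x$ must be realized as the order of a genuine $\g_k$-commutator---and this is where the coprime-action identities $[N,x]=[[N,x],x]$ and the nilpotent structure of $[N,x]$ would be deployed most heavily.
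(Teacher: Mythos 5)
Your first two paragraphs are correct and reproduce the paper's opening move: the elements $[y,{}_{k-1}x^{-1}]$, $y\in N$, are indeed $\gamma_k^*$-commutators lying in $N$, and coprime action gives $[N,{}_{k-1}x^{-1}]=[N,x^{-1}]$ (your primary-component and minimality reductions are harmless but unnecessary). The third paragraph, however, where you yourself locate all the difficulty, is a plan rather than a proof, and it cannot be completed along the lines you sketch. The appeal to Baumslag--Wiegold is misdirected: non-nilpotency of the subgroup you build only yields a coprime pair of \emph{elements} $u,v$ with $|uv|\neq|u||v|$, whereas the hypothesis constrains only pairs of $\gamma_k^*$-commutators, so no contradiction follows. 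Worse, with $x$ an arbitrary element --- a reading you explicitly insist on (``requires no assumption that $x$ itself be a $\gamma_{k-1}^*$- or $\gamma_k^*$-commutator'') --- the statement is actually false: take $G$ the symmetric group of degree $3$, any $k\geq 2$, $N$ the alternating subgroup and $x$ a transposition. Every nontrivial $\gamma_k^*$-commutator of this $G$ is a $3$-cycle, so the hypothesis on coprime pairs of $\gamma_k^*$-commutators holds vacuously, yet $[N,x]=N\neq 1$. Hence no analysis of your ``$p$-singular case'' can close the gap.

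The paper's proof is one line and uses precisely the ingredient you discarded: it pairs the constructed commutator $g=[y,{}_{k-1}x^{-1}]$ with $x$ \emph{itself}. Writing $u=[y,{}_{k-2}x^{-1}]$ one has $gx=u^{-1}xu$, so $gx$ is conjugate to $x$ and $|gx|=|x|$; on the other hand the hypothesis gives $|gx|=|g||x|$, forcing $g=1$, and then $[N,x]=[N,{}_{k-1}x^{-1}]=1$ by coprime action. For this application of the hypothesis $x$ must be a $\gamma_k^*$-commutator --- a condition stated explicitly in the parallel Lemma~\ref{commfond} and satisfied in every invocation of Lemma~\ref{ddd} in the paper, though omitted from the statement of Lemma~\ref{ddd} itself (a defect of the statement, not of the mathematics, as your example above in effect shows). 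The correct diagnosis, then, is not that a cleverer construction inside the $p'$-group $N$ was needed, but that $x$ has to serve as one member of the coprime pair, which is impossible to arrange unless $x$ is assumed to be a $\gamma_k^*$-commutator.
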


\begin{proof}
Let $y \in N$. Then $g=[y, \ _{k-1} x^{-1}]$ is a  $\gamma_k^*$-commutator in $N$. Since $(|g|,|x|)=1$, it follows that
\[|gx|=|g| |x|.\]
However, 
\[gx=[y, \ _{k-2} x^{-1}]^{-1}x[y, \ _{k-2} x^{-1}],\]
hence $|g|=1$. Since $[N, \ _{k-1} x^{-1}]=[N, x^{-1}]$, it results that $[N,x]=~1$.
\end{proof}

The soluble case follows.

\begin{proposition}\label{soluGk}
If $G$ is a finite soluble group in which $|ab|=|a||b|$ whenever $a$ and $b$ are $\gamma_k^*$-commutators of coprime orders, for some $k\geq~2$, then $\gamma_k^*(G)$ is nilpotent. Moreover $G$ is metanilpotent.
\end{proposition}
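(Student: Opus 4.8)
The plan is to reduce both conclusions to the single statement that the Fitting height satisfies $h(G)\le 2$. By Proposition~\ref{gammainfk} we have $\gamma_k^*(G)=\gi(G)$, and $G/\gi(G)$ is nilpotent by definition of the nilpotent residual; hence $h(G)\le 2$ gives $\gi(G)\le F(G)$, so $\gamma_k^*(G)$ is nilpotent and $G$ is metanilpotent, while each of these conclusions conversely forces $h(G)\le 2$. So I assume for contradiction that $h(G)\ge 3$. By the theorem of \cite{turull}, $G$ contains a tower $P_1P_2P_3$ of height $3$; thus $[P_2,P_1]=P_2$ and $[P_3,P_2]=P_3\neq 1$, $P_1$ normalizes $P_2$, $P_2$ normalizes $P_3$, and consecutive primes are distinct, so $(|P_2|,|P_3|)=1$.

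The core of the argument is to show that $P_2$ is generated by $\gamma_k^*$-commutators contained in $P_2$. Fix $a\in P_1$; as $y$ ranges over $P_2$ one has $(|y|,|a|)=1$, and the left-normed commutator $[y,\ _{k-1}a]$ is a $\gamma_k^*$-commutator (built up step by step exactly as in the proof of Lemma~\ref{ddd}), lying in $[P_2,a]\le P_2$ because $P_1$ normalizes $P_2$. Since $\langle a\rangle$ acts coprimely on $P_2$ we have $[P_2,a]=[P_2,\ _{k-1}a]$, and an induction on the length of the iterated commutator, carried out on the Frattini quotient of $[P_2,a]$ — on which $a-1$ is invertible, since $[[P_2,a],a]=[P_2,a]$ and conjugation within the $p_2$-group is trivial modulo the Frattini subgroup, so the relevant commutator maps become linear — shows that the values $[y,\ _{k-1}a]$, $y\in P_2$, already generate $[P_2,a]$. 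Letting $a$ range over $P_1$ and using $\langle [P_2,a]:a\in P_1\rangle=[P_2,P_1]=P_2$, it follows that the $\gamma_k^*$-commutators lying in $P_2$ generate $P_2$.

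It remains to apply Lemma~\ref{ddd}. Each generator $x$ above is a $p_2$-element, so $(|x|,|P_3|)=1$, and $x$ normalizes $P_3$ because $P_2$ does; hence $[P_3,x]=1$. As these $x$ generate $P_2$, we obtain $[P_3,P_2]=1$, contradicting $[P_3,P_2]=P_3\neq 1$. Therefore $h(G)\le 2$, and the proposition follows.

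I expect the generation step to be the main obstacle. Lemma~\ref{ddd} yields $[P_3,x]=1$ only when $x$ is a genuine $\gamma_k^*$-commutator — applying it to arbitrary elements of $P_1$ or $P_2$ would incorrectly force $G$ to be nilpotent — so one really must exhibit enough honest $\gamma_k^*$-values inside $P_2$ to generate it. The delicate point is that the pure iterated commutators $[y,\ _{k-1}a]$ remain $\gamma_k^*$-values while still filling up $[P_2,a]$, and this is exactly what the coprime-action computation on the Frattini quotient secures.
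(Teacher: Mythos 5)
Your proof is correct and takes essentially the same route as the paper's: reduce both conclusions to $h(G)\le 2$ via Proposition~\ref{gammainfk}, then rule out $h(G)\ge 3$ by taking a tower $P_1P_2P_3$ from \cite{turull}, showing that $P_2$ is generated by $\gamma_k^*$-commutators of $p_2$-power order, and applying Lemma~\ref{ddd} to these generators to contradict $[P_3,P_2]=P_3\neq 1$. The only difference is one of detail: the paper asserts the generation claim directly from $P_2=[P_2,P_1,\dots,P_1]$ ($k-1$ copies of $P_1$), whereas you supply the coprime-action/Frattini-quotient argument that actually justifies it (iterated subgroup commutators are not automatically generated by iterated element commutators), which is a worthwhile filling-in of the paper's tersest step.
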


\begin{proof}
Let $h$ be the Fitting height of $G$. Since $\gamma_k^*(G)=\gi(G)$ by Theorem \ref{gammainfk}, it follows that $\g_k(G)$ is nilpotent when $h \leq 2$.

Assume $h\geq 3$. Then, there exists a tower $P_1P_2P_3\ldots P_h$ of height $h$ in $G$. Since $P_2=[P_2,P_1]$, it follows that $$P_2=[P_2,\underbrace{P_1, \ldots,P_1}_{(k-1)\ times}].$$
Combining Lemma \ref{ddd} with the fact that $P_2$ is generated by $\gamma_k^*$-com\-mutators of $G$ of $p_2$-orders, we deduce that $P_3$ commutes with $P_2$. On the other hand, $[P_3,P_2]=P_3$, because $P_1P_2P_3\ldots P_h$ is a tower. This is a contradiction, and the proof is complete. 
\end{proof}

To succeed in completing the proof we need the following lemma.

\begin{lemma} \label{gk}
Let $G$ be a finite group such that $G=G'$ and let $q\in\pi(G)$. Then $G$ is generated by $\g_k$-commutators of $p$-power order for primes $p\neq q$.  
\end{lemma}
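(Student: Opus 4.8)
The plan is to prove that the normal subgroup
$M:=\langle\, z \mid z \text{ is a } \g_k\text{-commutator of } p\text{-power order for some prime } p\neq q\,\rangle$
equals $G$. I first extract the consequences of $G=G'$. Every quotient of a perfect group is perfect, so for each prime $p$ the group $G/O^p(G)$ is a perfect $p$-group and hence trivial; thus $O^p(G)=G$ for all $p$. Taking $p=q$ and recalling that $O^q(G)$ is generated by the $q'$-elements of $G$, hence by the Sylow $r$-subgroups with $r\neq q$, it is enough to show that each such Sylow subgroup is contained in $M$. As $M$ is normal and Sylow subgroups are conjugate, this reduces Lemma~\ref{gk} to a single claim: for every prime $p\neq q$ a Sylow $p$-subgroup $P$ of $G$ is generated by $\g_k$-commutators of $p$-power order.

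To obtain coprime generators of $P$ I would invoke the hyperfocal subgroup theorem. Since $O^p(G)=G$, it gives $P=P\cap O^p(G)=\langle\,[Q,O^p(N_G(Q))]\mid Q\le P\,\rangle$; as $O^p(N_G(Q))$ is generated by the $p'$-elements of $N_G(Q)$, the subgroup $P$ is generated by commutators $[u,s]$ with $u$ a $p$-element of some $Q\le P$ and $s$ a $p'$-element normalising $Q$. Each such $[u,s]$ lies in $Q\le P$, so it is a $p$-element, and $(|u|,|s|)=1$; hence it is a $\g_2$-commutator of $p$-power order, which already disposes of the case $k=2$.

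The core of the proof is to replace these level-$2$ commutators by honest $\g_k$-commutators. Fix a $p$-group $U$ and a $p'$-element $s$ normalising it, so that $\langle s\rangle$ acts coprimely on $U$ and the identity $[U,s]=[[U,s],s]$ from the Preliminaries holds. I would show by induction on $j\ge 2$ that $D:=[U,s]$ is generated by $\g_j$-commutators of $p$-power order lying in $D$. For $j=2$ this is the coprime generation $D=\langle[u,t]\mid u\in U,\ t\in\langle s\rangle\rangle$, each $[u,t]$ being a $\g_2$-commutator of $p$-power order. For the step, use $D=[D,s]$ and expand $[w,t]$ (for $w\in D$, $t\in\langle s\rangle$) as a product of $D$-conjugates of terms $[v^{\pm 1},t]$, where the $v$ are $\g_j$-commutators of $p$-power order generating $D$. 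Since $v^{\pm1}$ is a power of a $\g_j$-commutator, hence of the required form for the first entry, and $t$ is a coprime $p'$-element, each $[v^{\pm1},t]$ is a $\g_{j+1}$-commutator; it lies in the $p$-group $D$, so it is of $p$-power order, and the same holds for its conjugates. Feeding the case $j=k$ back into the hyperfocal generation yields $P\le M$, and assembling over all primes $p\neq q$ gives $M=G$, as required.

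I expect the main obstacle to be exactly this bootstrapping step: the fusion-theoretic input only produces coprime commutators of level $2$, and one must climb to arbitrary $k$ while simultaneously preserving both constraints — that the first argument of every newly formed commutator is a power of a lower coprime commutator, and that the resulting value remains of $p$-power order. The coprime action equality $[D,s]=[[D,s],s]$ is what makes the induction close, since it allows me to re-commute inside the $p$-group $D$ with the same $p'$-element and thereby raise the commutator level by one at each stage.
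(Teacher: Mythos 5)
Your proof is correct, but it takes a genuinely different route from the paper's. The paper fixes a prime $p\neq q$, lets $N_p$ be the subgroup generated by all $\g_k$-values of $p$-power order, and argues by contradiction: it passes to $G/N_p$, notes that a nontrivial perfect group has no normal $p$-complement, invokes Frobenius's normal $p$-complement criterion (\cite[Theorem 7.4.5]{go}) to produce a $p$-subgroup $H$ and a $p'$-element $a\in N_G(H)$ with $[H,a]\neq 1$, and derives the contradiction $1\neq [H,a]=[H,\ _{k-1}a]\leq N_p$ from coprime action; finally the product $T$ of the $N_p$ has $q$-group quotient, so $T=G$ since $G=G'$. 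You instead work directly in $G$, with no quotient: from $O^p(G)=G$ and Puig's hyperfocal subgroup theorem you generate a Sylow $p$-subgroup $P$ by the subgroups $[Q,s]$, with $Q\leq P$ and $s$ a $p'$-element normalising $Q$, and then you raise the commutator level from $2$ to $k$ inside the $p$-group $D=[Q,s]$ by induction, using $D=[D,s]$ and the expansion of $[w,s]$ into $D$-conjugates of the values $[v^{\pm 1},s]$. The trade-offs are clear. Frobenius's criterion is a much lighter tool than the hyperfocal subgroup theorem, so the paper's argument rests on more elementary ingredients. In exchange, your argument never has to transfer the property of being a $\g_k$-value of $p$-power order across the quotient map $G\to G/N_p$ (a reduction the paper treats as routine, though lifting such a value with control on its order requires a moment's thought), and your bootstrapping induction writes out in full the generation fact that the paper compresses into the single line $[H,a]=[H,\ _{k-1}a]\leq N_p$: the subgroup $[H,\ _{k-1}a]$ is not obviously generated by the elements $[x,\ _{k-1}a]$, $x\in H$, alone, but rather by their conjugates as well, and your conjugate-and-expand step supplies exactly this, observing that conjugates are again $\g_k$-values of $p$-power order. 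Both proofs ultimately rest on the same coprime-action identity $[U,s]=[[U,s],s]$.
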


\begin{proof}
For each prime $p\in\pi(G)\setminus\{q\}$, denote by $N_p$ the subgroup generated by all $\g_k$-values of $p$-power order. Let us show that for each $p$ the Sylow $p$-subgroups of $G$ are contained in $N_p$. Suppose that this is false and choose $p$ such that a Sylow $p$-subgroup of $G$ is not contained in $N_p$. Since $N_p \lhd G$, we can pass to the quotient $G/N_p$ and we can assume that $N_p=1$. Now $G=G'$, so $G$ does not possess a normal $p$-complement. Therefore, by \cite[Theorem 7.4.5]{go}, it follows that $G$ has a $p$-subgroup $H$ and a $p'$-element $a\in N_G(H)$ such that $[H,a]\neq1$. 
Then, $$1\neq[H,a]=[H,\underbrace{a,\ldots,a}_{k-1\ times}]\leq N_p,$$ a contradiction. Therefore $N_p$ contains the Sylow $p$-subgroups of $G$. Let $T$ be the product of all $N_p$ for $p\neq q$. Then $G/T$ is a $q$-group, and so $G=T$ because $G=G'$. We conclude that $G$ can be generated by $\g_k$-values of $p$-power order for $p \neq q$, as desired.  

\end{proof}

To give the proof of Theorem A we will need some remarks on {minimal simple groups}. A minimal simple group is a nonabelian simple group whose proper subgroups are soluble. They have been classified by Thompson in his famous paper \cite{thompson}.

\begin{theorem}[\cite{thompson}, Corollary 1] \label{thomClass} Every finite minimal simple group is isomorphic
to one of the following groups:
\begin{enumerate}
\item[(1)] $\Psl(2, 2^p)$, where $p$ is any prime;
\item[(2)] $\Psl(2, 3^p)$, where $p$ is any odd prime;
\item[(3)] $\Psl(2, p)$, where $p > 3$ is any prime such that $p^2 + 1 \equiv 0 \mod 5$;
\item[(4)] $\Psl(3, 3)$;
\item[(5)] $\Sz(2^p)$, where $p$ is any odd prime.
\end{enumerate}
\end{theorem}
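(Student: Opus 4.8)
The statement is Thompson's classification of the finite minimal simple groups, the historical point of departure for the local-analytic study of simple groups; I can only lay out the architecture of the argument, since its technical core occupies Thompson's $N$-group papers. The plan is to translate the hypothesis into local information. If $G$ is nonabelian simple and every proper subgroup is solvable, then for each nontrivial $p$-subgroup $P$ the normalizer $N_G(P)$ is a proper, hence solvable, subgroup, and likewise the centralizer of every involution is solvable. Thus $G$ is a simple $\emph{N}$-group, and one gains complete control of every $p$-local subgroup as a solvable group, accessed through its Fitting subgroup and its Hall decomposition. I would first invoke the Feit--Thompson odd-order theorem to force $|G|$ to be even, so that involutions exist and the prime $2$ can serve as the organizing prime for the whole analysis.

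Next I would split into cases according to the structure of a Sylow $2$-subgroup $S$ and of involution centralizers. If $S$ is cyclic, Burnside's normal complement theorem (applied at the smallest prime $2$) yields a normal $2$-complement, contradicting simplicity; if $S$ is generalized quaternion, the Brauer--Suzuki theorem forces $G/O(G)$ to have a central involution, again contradicting simplicity. This reduces to $S$ of $2$-rank at least two. When $S$ is dihedral, the Gorenstein--Walter theorem identifies $G$ with $\Psl(2,q)$ for odd $q$ or with $\mathbb{A}_7$; the latter is discarded because $\mathbb{A}_7$ contains the nonsolvable subgroup $\mathbb{A}_6$, so it is not minimal simple. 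This produces the odd-characteristic families, namely cases (2) and (3), and the isolated group $\Psl(3,3)$ of case (4) would be pinned down separately from its $3$-local structure.

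The deeper cases, where $S$ has $2$-rank at least two and is not dihedral, are of \emph{characteristic $2$ type}. Here I would deploy the Thompson subgroup $J(S)$, the Thompson factorization, Glauberman's $ZJ$-theorem and $p$-stability to show that the $2$-local subgroups are tightly constrained, and ultimately to produce a strongly embedded subgroup. Bender's classification of groups with a strongly embedded subgroup then identifies $G$ with $\Psl(2,2^n)$, $\Sz(2^n)$, or $\mathrm{PSU}(3,2^n)$. Minimality excludes $\mathrm{PSU}(3,2^n)$, which carries nonsolvable sections, and forces the underlying field to have prime degree, giving families (1) and (5); that $\Sz$ exists only over fields of odd degree $2m+1$ is what makes the prime in case (5) odd.

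For the final step I would invoke the classical identification theorems: the Brauer--Suzuki--Wall characterization of $\Psl(2,q)$ by its involution-centralizer structure, Suzuki's own characterization of the groups $\Sz(q)$, and a direct treatment of $\Psl(3,3)$. Dickson's determination of the subgroups of $\Psl(2,q)$ (and the analogous subgroup data in the other families) is then used to impose minimality and to extract the exact arithmetic restrictions: the field degree must be prime, odd in cases (2) and (5), while the congruence $p^2+1\equiv 0 \pmod 5$ in case (3) is precisely the condition guaranteeing that $\Psl(2,p)$ contains no subgroup isomorphic to $\mathbb{A}_5\cong\Psl(2,4)\cong\Psl(2,5)$, so that every proper subgroup is solvable. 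The main obstacle, by far, is the characteristic-$2$-type local analysis culminating in the strongly embedded reduction: verifying the $2$-local and odd-local structure sharply enough to apply Bender's theorem is the genuine technical heart of Thompson's work, and everything else is either classical transfer theory or the citation of established recognition theorems.
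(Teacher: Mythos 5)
The paper offers no proof of this statement at all: it is quoted verbatim as Corollary~1 of Thompson's $N$-group paper \cite{thompson}, so your attempt can only be judged on its internal soundness, and there it has a concrete structural flaw beyond its (openly admitted) reliance on citing heavy machinery. The dichotomy you impose after eliminating cyclic and generalized quaternion Sylow $2$-subgroups --- dihedral (handled by Gorenstein--Walter) versus ``characteristic $2$ type'' (pushed to a strongly embedded subgroup and Bender's theorem) --- is false. A Sylow $2$-subgroup of $2$-rank two can also be semidihedral or wreathed, and such groups are neither on the Gorenstein--Walter list nor of characteristic $2$ type, nor do they possess strongly embedded subgroups. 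The group $\Psl(3,3)$ of case (4) is precisely of this kind: its Sylow $2$-subgroups are semidihedral of order $16$. In your case division it therefore falls through the cracks: your sketch would push it into the Bender alternative $\Psl(2,2^n)$, $\Sz(2^n)$, $\mathrm{PSU}(3,2^n)$, where it does not occur, and your remark that it ``would be pinned down separately from its $3$-local structure'' is not attached to any branch of the analysis (you place it, incorrectly, as an offshoot of the dihedral case). To repair the outline you need a further case, e.g.\ the Alperin--Brauer--Gorenstein classification of groups with semidihedral or wreathed Sylow $2$-subgroups --- yielding $\Psl(3,q)$, $\mathrm{PSU}(3,q)$, $M_{11}$, of which minimality leaves only $\Psl(3,3)$ --- before you can legitimately claim that the remaining configurations lead to a strongly embedded subgroup.

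Two secondary remarks. First, your route is in any case not Thompson's: Bender's strongly embedded theorem (1971) and the Gorenstein--Walter and Alperin--Brauer--Gorenstein classifications are independent of, or postdate, the 1968 Corollary~1, whose actual proof runs through the $N$-group machinery (uniqueness theorems and the analysis of maximal solvable subgroups); as a modern reconstruction your plan is legitimate in spirit, but it is then a different proof and must be complete as such. Second, the endgame arithmetic you state is correct where you state it: prime field degree via nonsolvable subfield subgroups, oddness of the degree for $\Sz$, the exclusion of $p=2$ in characteristic $3$ because $\Psl(2,9)\cong \mathbb{A}_6$ contains $\mathbb{A}_5$, and the congruence $p^2+1\equiv 0 \bmod 5$ as exactly the Dickson condition ruling out a subgroup $\mathbb{A}_5\leq \Psl(2,p)$, which by Dickson's subgroup list is the only nonsolvable possibility.
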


In light of Theorem \ref{thomClass}, we recall the following result from \cite{MT}.

\begin{proposition}[\cite{MT}]\label{maj}
Every finite minimal simple group $G$ contains a subgroup $H= A \rtimes T$ where $A$ is an elementary abelian $2$-group and $T$ is a subgroup of odd order such that $C_A(T)=1$. 
\end{proposition}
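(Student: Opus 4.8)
The plan is to go through the five families of Theorem~\ref{thomClass} one at a time and, in each, exhibit a Frobenius subgroup $H=A\rtimes T$ whose kernel $A$ is an elementary abelian $2$-group and whose complement $T$ has odd order. This suffices: if $A\rtimes T$ is Frobenius with complement $T$, then $T$ acts fixed-point-freely on $A$, and in particular $C_A(T)=1$. So in each family it is enough to locate an elementary abelian $2$-subgroup admitting a fixed-point-free action of an odd-order subgroup realized inside $G$. I would split the five families into the two even-characteristic families $\Psl(2,2^p)$ and $\Sz(2^p)$, and the three odd families $\Psl(2,3^p)$, $\Psl(2,p)$ and $\Psl(3,3)$.

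For the even families I would use the natural $2$-transitive permutation representation. In both $\Psl(2,2^p)$ (acting on the projective line) and $\Sz(2^p)$ (acting on $q^2+1$ points, with $q=2^p$) the point stabiliser is a Frobenius group $U\rtimes T$ whose kernel $U$ is the Sylow $2$-subgroup and whose complement $T$ is the torus, cyclic of odd order $q-1$; being a Frobenius complement, $T$ acts fixed-point-freely on $U$. For $\Psl(2,2^p)$ the kernel $U\cong\mathbb{F}_q^{+}$ is already elementary abelian, so I would take $A=U$ and $T$ the torus. For $\Sz(2^p)$ the kernel $U$ has nilpotency class $2$, so I would instead take $A=Z(U)$, which is elementary abelian of order $q$, is normalized by $T$, and still satisfies $C_A(T)=1$ because $T$ is fixed-point-free on all of $U\supseteq Z(U)$.

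For the three odd-characteristic families I would produce a copy of $A_4\cong V\rtimes C_3$, which is itself exactly the desired group, with $A=V$ the Klein four-group and $T=C_3$ the $3$-cycle acting fixed-point-freely (it permutes the three involutions of $V$ cyclically). For $\Psl(2,3^p)$ and $\Psl(2,p)$, where $q$ is odd and larger than $3$, Dickson's description of the subgroups of $\Psl(2,q)$ guarantees that $N_G(V)$ is isomorphic to $A_4$ or $S_4$, hence $A_4\leq G$. For $\Psl(3,3)=\mathrm{SL}(3,3)$ I would write down the subgroup explicitly: the three diagonal involutions $\mathrm{diag}(-1,-1,1)$, $\mathrm{diag}(-1,1,-1)$, $\mathrm{diag}(1,-1,-1)$ generate a Klein four-group $V$, and the permutation matrix of the cycle $(1\,2\,3)$ lies in $\mathrm{SL}(3,3)$ and cyclically permutes them, giving $A_4=V\rtimes C_3$ with $C_V(C_3)=1$.

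The main obstacle I anticipate is the Suzuki family: there the Sylow $2$-subgroup is not elementary abelian, so one cannot take $A$ to be the whole unipotent radical and must argue that passing to its center $Z(U)$ preserves both the $T$-invariance and the fixed-point-freeness; this rests on the classical fact that the one-point stabiliser in $\Sz(q)$ is a Frobenius group. The remaining verifications, namely that the torus acts without fixed points in the two rank-one families and that $A_4$ embeds in each odd-characteristic group, are standard consequences of the subgroup structure of these groups and require no new ideas.
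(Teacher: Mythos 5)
Your proof is correct, but note that the paper never proves Proposition \ref{maj} at all: it is recalled from \cite{MT} without proof. Your case-by-case argument over Thompson's classification (Theorem \ref{thomClass}) is essentially the standard route, and it matches the collection of facts the paper itself invokes later in the proof of Proposition \ref{invstar}: point-stabiliser (Borel) Frobenius subgroups for $\Psl(2,2^p)$ and $\Sz(2^p)$, and copies of the alternating group of degree $4$ for the three odd-characteristic families, the latter supplied either by Dickson's subgroup classification or, for $\Psl(3,3)$, by your explicit matrices. The one place where you add a point of care that the paper glosses over is the Suzuki case: the Frobenius group of order $q^2(q-1)$ cited in the proof of Proposition \ref{invstar} has as kernel the full Sylow $2$-subgroup $U$, which has exponent $4$ and so cannot serve as $A$; your substitution $A=Z(U)$, which is elementary abelian of order $q$, characteristic in $U$ (hence $T$-invariant), and satisfies $C_{Z(U)}(T)\leq C_U(T)=1$ by the Frobenius property, is exactly the needed repair, and it is how the cited source handles that family as well.
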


We are now in position to give the proof of our first main result.
 
\begin{proof}[\bf Proof of Theorem A] Since the necessary condition for $\g_k(G)$ to be nilpotent is obviously satisfied, we only need to prove the sufficient one.

Suppose that the theorem is false and let $G$ be a counterexample of minimal order. In view of Proposition \ref{soluGk} $G$ is not soluble while all proper subgroups of $G$ so are. Indeed, if $H <G$ then $\g_k(H)$ is nilpotent by minimality of $G$; moreover, $H/\g_k(H)=H/\gi(H)$ is nilpotent, thus $H$ is soluble.
Therefore $G=G'$. Let $R$ be the soluble radical of $G$. It follows that $G/R$ is a nonabelian simple group. Moreover, by Proposition \ref{soluGk}, $R$ is metanilpotent. We want to show that $R$ coincides with $Z(G)$, which is obvious if $R=1$. Suppose $R\neq1$ and choose $q\in\pi(F(G))$. According to Lemma \ref{gk}, $G$ is generated by the $\g_k$-commutators of $p$-power order for primes $p\neq q$. Let $Q$ be the Sylow $q$-subgroup of $F(G)$. By Lemma \ref{ddd} $[Q,x]=1$, for every $\g_k$-commutator $x$ of $q'$-order. Therefore $Q\leq Z(G)$. This happens for each choice of $q\in\pi(F(G))$, so that $F(G)= Z(G)$.  

Now, since $R$ is metanilpotent, there exists a positive integer $s$ such that $\gamma_s(R)$ is nilpotent. Therefore, if $x \in R$ and $y \in G$, we have
\[[[y,x], \underbrace{x,\ldots,x}_{s-1 \ times}]\in\gamma_s(R)\leq F(G)=Z(G).\] 
Consequently, $[y, \ _{s+1} x]=1$, and so every element $x$ is Engel. Hence $R\leq F(G)$ by Theorem \cite[12.3.7]{rob}. Thus $R=Z(G)$ and $G$ is quasisimple. 

We claim that $G$ contains a $\g_k$-commutator  $a$ such that $a$ is a $2$-element and $a$ has order 2 modulo $Z(G)$. Since $G/Z(G)$ is minimal simple, Proposition \ref{maj} implies the existence of a subgroup \[H/Z(G) = A/Z(G) \rtimes T/Z(G)\] where $A/Z(G)$ is an elementary abelian $2$-group and $T/Z(G)$ is a group of odd order such that $C_{A/Z(G)}(T/Z(G))=1$. 
Since $A$ is nilpotent, its Sylow $2$-subgroup $P$ is normal in $H$ and $A=PZ(G)$. Therefore, \[A/Z(G) = [A/Z(G), \ _{k-1} T/Z(G)],\] and there exists $[x, \ _{k-1} y]Z(G)$ in $A/Z(G)$ with $x$ in $P$ and $yZ(G)$ in $T/Z(G)$. Moreover, since $\langle y \rangle Z(G)$ is nilpotent, we have $\langle y \rangle Z(G)=WZ(G)$ with $W$ of odd order. Hence we may suppose that $y$ has odd order. If $g=[x, \ _{k-1} y]z$, for some $z \in Z(G)$, then $[g,y]$ is a $\gamma_k^*$-commutator of $2$-power order with order $2$ modulo the center.

Finally, fix an element $a$ with the above properties. Since $G/Z(G)$ is a nonabelian simple group, it follows from \cite[Theorem 3.8.2]{go} that there exists an element $t\in G$ such that the order of $[a,t]$ is odd. On the one hand, it is clear that $a$ inverts $[a,t]$. On the other hand, by Lemma \ref{ddd}, $a$ centralizes $[a,t]$. This is a contradiction.
\end{proof}

 In particular, Theorem A applies to $\gi(G)$, as showed by the following two corollaries.

\begin {corollary}
The nilpotent residual $\gi(G)$ of a finite group $G$ is nilpotent if and only if $|ab|=|a| |b|$ whenever $a$ and $b$ are $\gamma_k^*$-commutators of coprime orders, for some $k \geq 2$.
\end {corollary}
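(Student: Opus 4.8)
The plan is to deduce this corollary directly from Theorem A together with Proposition \ref{gammainfk}. The crucial observation is that Proposition \ref{gammainfk} identifies the coprime subgroup $\g_k(G)$ with the nilpotent residual $\gi(G)$ for every $k \geq 2$; consequently the nilpotency of $\gi(G)$ is equivalent to the nilpotency of $\g_k(G)$ for any (equivalently, for every) such $k$, and the entire statement becomes a translation of Theorem A through this identification.

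First I would treat the sufficiency. Suppose that for some $k \geq 2$ the equality $|ab|=|a||b|$ holds whenever $a$ and $b$ are $\gamma_k^*$-commutators of coprime orders. By Theorem A this forces $\gamma_k^*(G)$ to be nilpotent, and since $\gamma_k^*(G)=\gi(G)$ by Proposition \ref{gammainfk}, we conclude that $\gi(G)$ is nilpotent.

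For the converse, assume $\gi(G)$ is nilpotent and fix any $k \geq 2$. Proposition \ref{gammainfk} gives $\gamma_k^*(G)=\gi(G)$, so $\gamma_k^*(G)$ is nilpotent, and the (trivial) necessity part of Theorem A then yields $|ab|=|a||b|$ for all $\gamma_k^*$-commutators $a,b$ of coprime orders. In particular the stated condition holds for this $k$, which is all that is required, since the statement only asks for the condition to hold for \emph{some} $k \geq 2$.

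Since both implications are immediate consequences of results already established, I do not expect any genuine obstacle here: the whole content of the corollary is packaged in Theorem A and the identification $\gamma_k^*(G)=\gi(G)$. The only point deserving a word of attention is the quantifier ``for some $k \geq 2$'', but this causes no difficulty, as the equivalence in fact holds uniformly for every $k \geq 2$.
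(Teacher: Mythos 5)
Your proposal is correct and is exactly the argument the paper intends: the corollary is stated as an immediate consequence of Theorem A combined with the identification $\gamma_k^*(G)=\gamma_{\infty}(G)$ from Proposition~\ref{gammainfk}, which is precisely how you argue both directions, including the correct handling of the ``for some $k\geq 2$'' quantifier.
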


\begin {corollary}\label{corollarioE}
Let $G$ be a finite group. Then the nilpotent residual $\gamma_{\infty}(G)$ is nilpotent if and only if $|ab|=|a| |b|$ whenever $a$ and $b$ are $\d_1$-commutators of coprime orders.
\end {corollary}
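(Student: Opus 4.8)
The plan is to deduce this statement directly from Theorem A applied with $k=2$. The essential observation is that the set of $\d_1$-commutators coincides with the set of $\g_2$-commutators. Indeed, unwinding the recursive definitions, a $\d_1$-commutator is an element of the form $[a,b]$ where $a$ and $b$ range over powers of $\d_0$-commutators subject to $(|a|,|b|)=1$; since every element of $G$ is a $\d_0$-commutator, the elements $a,b$ simply range over all of $G$. Likewise, a $\g_2$-commutator is an element $[a,b]$ with $a$ a power of a $\g_1$-commutator, $b\in G$, and $(|a|,|b|)=1$; as every element is a $\g_1$-commutator, once again $a$ and $b$ range over all of $G$. Hence both notions describe exactly the commutators $[a,b]$ formed from elements of coprime orders.

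Given this identification, I would next invoke Proposition \ref{gammainfk}, which yields $\g_2(G)=\gi(G)$. The hypothesis that $|ab|=|a||b|$ for all $\d_1$-commutators $a,b$ of coprime orders is then literally the hypothesis of Theorem A in the case $k=2$, and the resulting conclusion that $\g_2(G)=\gi(G)$ is nilpotent is precisely the assertion of the corollary. The converse implication is immediate, since the stated condition on orders is a necessary consequence of the nilpotency of $\gi(G)$, as already observed in the necessity part of Theorem A.

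There is no genuine obstacle in this argument: the only point that requires care is the verification that the two families of commutators coincide, which reduces to comparing the base cases $\g_1$ and $\d_0$ in the two recursive definitions. Once that comparison is made, the corollary is a formal consequence of Theorem A together with Proposition \ref{gammainfk}.
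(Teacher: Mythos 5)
Your proof is correct and follows essentially the same route as the paper: the paper derives this corollary from Theorem A with $k=2$ via exactly the two observations you make, namely that $\d_1$-commutators and $\g_2$-commutators coincide (because the base sets of powers of $\d_0$-, respectively $\g_1$-, commutators are both all of $G$) and that $\g_2(G)=\gi(G)$ by Proposition~\ref{gammainfk}. Your handling of the converse (elements of coprime orders in a nilpotent group commute) is also the same standard observation the paper uses for the necessity direction of Theorem A.
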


 As an immediate consequence of Theorem~\ref{teoMT} and Corollary~\ref{corollarioE} we obtain the following corollary.

\begin {corollary}
Let $G$ be a finite group. The following properties are equivalent:
\begin{enumerate}
\item[(1)] $|ab|=|a| |b|$ whenever $a$ and $b$ are $\d_1$-commutators of coprime orders;
\item[(2)] the nilpotent residual of $G$ is nilpotent;
\item[(3)] the Fitting height of $G$ is at most $2$;
\item[(4)] there exists a simple commutator word $w=[x_{i_1}, \ldots, x_{i_k}]$ with $i_1 \neq i_j$ for every $j \in \{2, \ldots, k\}$ such that $|xy|=|x||y|$ for any $w$-values $x,y\in G$ of coprime orders.
\end{enumerate}
\end {corollary}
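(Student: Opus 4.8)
The plan is to treat condition (2)---the nilpotency of $\gi(G)$---as a hub and to match it against each of (1), (3) and (4) in turn. The only substantial external inputs are Corollary~\ref{corollarioE} and Theorem~\ref{teoMT}.

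The equivalence (1)$\,\Leftrightarrow\,$(2) is nothing but Corollary~\ref{corollarioE}. For (2)$\,\Leftrightarrow\,$(3) I would pass through the lower Fitting series: by Lemma~\ref{deltagen} we have $N_1(G)=\gi(G)$ and $N_2(G)=\gi(\gi(G))=\d_2(G)$, and since a finite group $H$ is nilpotent exactly when $\gi(H)=1$, the subgroup $\gi(G)$ is nilpotent if and only if $\d_2(G)=1$. By Theorem~\ref{fittingdelta} this last equality is equivalent to $G$ being soluble of Fitting height at most $2$. This step is pure bookkeeping.

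The content lies in (2)$\,\Leftrightarrow\,$(4), which I would split in two. For (2)$\,\Rightarrow\,$(4), recall that the lower central series of a finite group stabilizes, so $\gamma_n(G)=\gi(G)$ for all sufficiently large $n$. Fixing such an $n$, the lower central word $\gamma_n=[x_1,\ldots,x_n]$ is a simple commutator word of the required shape ($i_1=1\neq i_j$ for $j\geq 2$) whose verbal subgroup equals the nilpotent group $\gi(G)$; Theorem~\ref{teoMT} then returns the order condition for $\gamma_n$-values of coprime orders, which is precisely (4) witnessed by $w=\gamma_n$.

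The main obstacle is the reverse implication (4)$\,\Rightarrow\,$(2), where one is given a single, possibly irregular, word $w=[x_{i_1},\ldots,x_{i_k}]$ with $i_1\neq i_j$ and must extract information about $\gi(G)$. First I would invoke Theorem~\ref{teoMT} to deduce that $w(G)$ is nilpotent. The crux is then to exploit the shape of $w$: because $x_{i_1}$ occurs only once, the specialization $x_{i_1}\mapsto y$ and $x_m\mapsto h$ for every $m\neq i_1$ collapses $w$ into the Engel value $[y,\ _{k-1}h]=\e_{k-1}(y,h)$. Since $w(G/w(G))=1$, all $w$-values vanish in $\overline{G}=G/w(G)$; in particular $[\,\overline{y},\ _{k-1}\overline{h}\,]=1$ for all $\overline{y},\overline{h}$, so $\overline{G}$ is a $(k-1)$-Engel group, hence nilpotent by Zorn's theorem on finite Engel groups (see \cite{rob}). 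Therefore $\gi(G)\leq w(G)$, and being a subgroup of the nilpotent group $w(G)$ it is itself nilpotent, which is (2). The one point demanding care is the legitimacy of the collapsing specialization and the check that it yields exactly the $(k-1)$st Engel word, both of which hinge on the hypothesis $i_1\neq i_j$ for every $j\geq 2$.
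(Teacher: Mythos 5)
Your proposal is correct and takes essentially the same approach as the paper: the paper states this corollary as an immediate consequence of Theorem~\ref{teoMT} and Corollary~\ref{corollarioE}, which are exactly the two inputs your argument is built on. Your write-up simply supplies the details the paper leaves implicit---the Fitting-height bookkeeping for (2)$\Leftrightarrow$(3) via Lemma~\ref{deltagen} and Theorem~\ref{fittingdelta}, the choice $w=\gamma_n$ with $\gamma_n(G)=\gi(G)$ for (2)$\Rightarrow$(4), and the Engel-specialization plus Zorn's theorem argument showing $\gi(G)\leq w(G)$ for (4)$\Rightarrow$(2).
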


\section{Proof of Theorem B}

We start with the following lemma.
\begin{lemma}\label{commfond}
Let $k$ be a positive integer and let $G$ be a finite group in which $|ab|=|a| |b|$ whenever $a$ and $b$ are $\d_k$-commutators such that $(|a|,|b|)=1$. Let $x$ be a $\d_k$-commutator of $G$ and let $N$ be a subgroup of $G$ normalized by $x$. If $(|N|,|x|)=1$, then $[N, x]=1$.
\end{lemma}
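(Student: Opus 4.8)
The plan is to follow the template of Lemma~\ref{ddd}: from an arbitrary $y\in N$ I will manufacture a commutator $g$ lying in $N$ (hence of order coprime to $|x|$) whose product $gx$ is conjugate to $x$, so that the hypothesis $|gx|=|g||x|$ forces $g=1$. The essential new point, compared with the $\g_k$ situation, is that the relevant iterated commutator must be recognized as a genuine $\d_k$-commutator. To this end I would first record two elementary facts about coprime commutators. First, every $\d_k$-commutator is a $\d_{k-1}$-commutator: by induction a power of a $\d_{k-1}$-commutator is a power of a $\d_{k-2}$-commutator (the base case being that every element is a $\d_0$-commutator), so a $\d_k$-commutator $[a,b]$ with $a,b$ powers of $\d_{k-1}$-commutators is also a $\d_{k-1}$-commutator. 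Second, the set of $\d_{k-1}$-commutators is closed under inversion, since $[a,b]^{-1}=[b,a]$ and the defining condition on $a,b$ is symmetric. Together these show that $x^{-1}$ is a $\d_{k-1}$-commutator whenever $x$ is a $\d_k$-commutator.

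Now fix $y\in N$. Since $x$ normalizes $N$ and $(|N|,|x|)=1$, the element $x^{-1}$ acts coprimely on $N$ and is a $\d_{k-1}$-commutator normalizing $N$. For $k\ge 2$, Lemma~\ref{uuu}, applied with $k-1$ in place of $k$ and with the $\d_{k-1}$-commutators all taken equal to $x^{-1}$, shows that
\[
g=[y,\ _{k-1}x^{-1}]
\]
is a $\d_k$-commutator; for $k=1$ the element $g=[y,x^{-1}]$ is a $\d_1$-commutator directly from the definition, as both $y$ and $x^{-1}$ are powers of $\d_0$-commutators of coprime orders. In either case $g\in N$, so $(|g|,|x|)=1$, and since $x$ is a $\d_k$-commutator the hypothesis yields $|gx|=|g||x|$. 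On the other hand, writing $h=[y,\ _{k-2}x^{-1}]$ when $k\ge 2$ and $h=y$ when $k=1$, we have $g=[h,x^{-1}]=h^{-1}xhx^{-1}$, whence $gx=h^{-1}xh=x^{h}$. Thus $|gx|=|x|$, and comparing the two values forces $|g|=1$, that is $g=1$.

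It remains to pass from the pointwise vanishing of $g$, valid for every $y\in N$, to $[N,x]=1$. When $k=1$ the identity $[y,x^{-1}]=1$ for all $y\in N$ gives $[N,x^{-1}]=1$ at once. When $k\ge 2$, the subgroup $[N,\ _{k-1}x^{-1}]$ is generated by conjugates of the elements $[y,\ _{k-1}x^{-1}]$, which are all trivial, so $[N,\ _{k-1}x^{-1}]=1$; by the standard properties of coprime action $[N,x^{-1}]=[N,\ _{k-1}x^{-1}]$, and again $[N,x^{-1}]=1$. In both cases $[N,x]=[N,x^{-1}]=1$, as required. I expect the recognition step of the second paragraph to be the only real obstacle: one must be certain that $[y,\ _{k-1}x^{-1}]$ qualifies as a $\d_k$-commutator, and this is exactly what the two facts above and Lemma~\ref{uuu} are designed to guarantee. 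Once $g$ is known to be a $\d_k$-commutator, the rest is a routine conjugacy-and-order computation together with the elementary coprime-action properties.
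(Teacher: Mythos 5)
Your proposal is correct and follows essentially the same route as the paper's own proof: where the paper forms $g=[y,{}_{k}x]$ (a $\d_{k+1}$-commutator by Lemma~\ref{uuu}, hence a $\d_k$-commutator) and notes that $gx^{-1}$ is a conjugate of $x^{-1}$, you form $g=[y,{}_{k-1}x^{-1}]$ (a $\d_k$-commutator, again by Lemma~\ref{uuu} together with the inversion-closure and level-containment facts you record) and note that $gx$ is a conjugate of $x$, and in both cases the order hypothesis forces $g=1$, after which the coprime-action identity $[N,x]=[N,{}_{m}x]$ concludes. The auxiliary facts you prove explicitly are exactly the ones the paper uses implicitly, and your final elementwise-to-subgroup step is asserted at the same level of detail as in the paper, so the two arguments essentially coincide.
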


\begin{proof}
By Lemma \ref{uuu}, if $y \in N$ the element $[y,{}_k x]$ is a $\d_k$-commutator in $N$, whose order is prime to the order of $x$. Hence, since $x^{-1}$ is still a $\d_k$-commutator,
\[\displaystyle{|[y,{}_k x] x^{-1}|=|[y,{}_k x]| |x^{-1}|}.\]
However,  $[y,{}_k x] x^{-1}$ is a conjugate of $x^{-1}$, and so $[y,{}_k x]=1$. Therefore, $[N,x]=[N,{}_k x]=1$, and the lemma is proved.
\end{proof}

The following result shows that in the soluble case we achieve the nilpotency of $\d_k(G)$ in weaker hypothesis; in fact, we impose the condition on the orders of $\d_k$-commutators, without considering the powers.

\begin{proposition}\label{main3}
Let $G$ be a finite soluble group and let $k \geq 1$. If $|ab|=|a| |b|$ whenever $a$ and $b$ are $\d_k$-commutators such that $(|a|,|b|)=1$, then $\d_k(G)$ is nilpotent.
\end{proposition}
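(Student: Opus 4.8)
The plan is to convert the nilpotency of $\d_k(G)$ into a bound on the Fitting height of $G$, and then to rule out any excess height by a tower argument, exactly in the spirit of the soluble part of Theorem A but driven by Lemma~\ref{commfond} in place of Lemma~\ref{ddd}. By Lemma~\ref{deltagen} we have $\d_k(G)=N_k(G)$ and $\d_{k+1}(G)=N_{k+1}(G)=\gi(N_k(G))=\gi(\d_k(G))$. Since a finite group is nilpotent precisely when its nilpotent residual is trivial, $\d_k(G)$ is nilpotent if and only if $\gi(\d_k(G))=1$, that is, if and only if $\d_{k+1}(G)=1$. As $G$ is soluble, Theorem~\ref{fittingdelta} makes the latter equivalent to $h(G)\le k+1$. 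So it suffices to prove $h(G)\le k+1$.

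First I would argue by contradiction, assuming $h(G)\ge k+2$. By Turull's theorem \cite{turull}, $G$ then possesses a tower $P_1\cdots P_{k+2}$ of height $k+2$, and I would concentrate on its last two factors. By Lemma~\ref{coprimetower}, the factor $P_{k+1}$ is generated by $\d_k$-commutators of $G$ contained in $P_{k+1}$; being elements of the $p_{k+1}$-group $P_{k+1}$, each such commutator is a $p_{k+1}$-element. Since $k+1<k+2$, property (2) of a tower guarantees that $P_{k+1}$ normalizes $P_{k+2}$, so every generating $\d_k$-commutator $x\in P_{k+1}$ normalizes $P_{k+2}$, while $(|x|,|P_{k+2}|)=1$ because $p_{k+1}\ne p_{k+2}$. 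Now Lemma~\ref{commfond}, applied to $x$ and $N=P_{k+2}$, yields $[P_{k+2},x]=1$. As $P_{k+1}$ is generated by these commutators, I conclude $[P_{k+2},P_{k+1}]=1$. But property (3) of a tower gives $[P_{k+2},P_{k+1}]=P_{k+2}\ne 1$, a contradiction. Hence $h(G)\le k+1$, and the reduction above finishes the proof.

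The argument is short once the reductions are in place, since the genuine content is carried entirely by Lemma~\ref{commfond}, which is already available. The step I would be most careful about is the index bookkeeping: one must apply Lemma~\ref{coprimetower} to the factor $P_{k+1}$, whose generators are $\d_k$-commutators, and not to $P_{k+2}$, and then play the coprime commutation forced by Lemma~\ref{commfond} against the defining relation $[P_{k+2},P_{k+1}]=P_{k+2}$ of the tower. I would also emphasize, as the statement advertises, that the hypothesis is imposed only on $\d_k$-commutators themselves and never on their proper powers, which is exactly where this soluble case is stronger than the general Theorem B.
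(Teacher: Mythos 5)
Your proof is correct and follows essentially the same route as the paper's: the same reduction via Lemma~\ref{deltagen} and Theorem~\ref{fittingdelta} to bounding the Fitting height by $k+1$, followed by the same tower argument pitting Lemma~\ref{coprimetower} and Lemma~\ref{commfond} against the relation $[P_{k+2},P_{k+1}]=P_{k+2}$. The only cosmetic difference is that you extract a tower of height exactly $k+2$ from Turull's theorem, whereas the paper works with a tower of full height $h$; both are valid.
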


\begin{proof}
Let $h=h(G)$. By Theorem \ref{fittingdelta}, when $h \leq k+1$ then $\d_{k+1}(G)=1$. Therefore, Theorem \ref{deltagen} implies that $\d_{k+1}(G)=\gamma_{\infty}(\d_k(G))$ and so $\d_k(G)$ is nilpotent. 

Now  assume $h \geq k+2$. Thus there exists a tower \[ P_1 \cdots P_{k+2} \cdots P_h\] of $G$ of height $h$. By Lemma \ref{coprimetower},  $P_{k+1}$ is generated by $\d_k$-commutators in $P_{k+1}$. Combining Lemma \ref{commfond} with the fact that $P_{k+1}$ is generated by $\d_k$-commutators of $p_{k+1}$-orders, we deduce that $P_{k+2}$ commutes with $P_{k+1}$. On the other hand, $[P_{k+2},P_{k+1}]=P_{k+2}$, because $P_1\cdots P_h$ is a tower. This is a contradiction, and the proof is complete. 
\end{proof}

Now, we need a result concerning the generators of a minimal nonsoluble group. Recall that a group $G$ is {\it minimal nonsoluble} if every proper subgroup of $G$ is soluble, but $G$ is not soluble.  

\begin{lemma}\label{genDk}
Let $G$ be a finite minimal nonsoluble group. If $q \in \pi(G)$, then $G$ can be generated by powers of $\d_k$-commutators of $q'$-order.
\end{lemma}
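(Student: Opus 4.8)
The plan is to mirror the structure of Lemma~\ref{gk}, but to replace the normal $p$-complement machinery used there by the Burnside-type Theorem~\ref{prime}, which is available for $\d_k$-commutators and makes the argument considerably shorter. Let $N$ denote the subgroup of $G$ generated by all powers of $\d_k$-commutators of $q'$-order. Since the set of $\d_k$-commutators is invariant under automorphisms, conjugating a power of a $\d_k$-commutator of $q'$-order again yields a power of a ($q'$-order) $\d_k$-commutator; hence $N$ is normal in $G$. The goal is to prove $N=G$, and I would argue by contradiction, assuming $N<G$.

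The key step is to control the orders of $\d_k$-commutators in the quotient $G/N$. Let $\bar y$ be a $\d_k$-commutator of $G/N$. By the lifting remark following the definition of the coprime commutators, there is a $\d_k$-commutator $y\in G$ with $\bar y=yN$. Writing $|y|=q^a m$ with $(m,q)=1$, the power $y^{q^a}$ is a power of the $\d_k$-commutator $y$ of $q'$-order $m$, hence lies in $N$. Therefore $\bar y^{\,q^a}=1$, so $\bar y$ is a $q$-element. Thus every $\d_k$-commutator of $G/N$ is a $\{q\}$-element, and Theorem~\ref{prime} (applied with the single-prime set $\pi=\{q\}$) yields that $G/N$ is soluble.

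Finally, I would invoke the minimal nonsoluble hypothesis. As $N<G$ is a proper subgroup, it is soluble; together with the solubility of $G/N$ just obtained, this forces $G$ to be soluble, contradicting the assumption that $G$ is nonsoluble. Hence $N=G$, that is, $G$ is generated by powers of $\d_k$-commutators of $q'$-order. The only delicate point, and the step I expect to need the most care, is the passage to $G/N$: one must check that the lifting remark applies verbatim to $\d_k$-commutators and that every surviving $\d_k$-commutator in the quotient is genuinely a $q$-element. Both are immediate from the tools already in the excerpt, which is precisely why Theorem~\ref{prime} can do the heavy lifting here in place of the more involved Frobenius-type argument employed for $\g_k$-commutators in Lemma~\ref{gk}.
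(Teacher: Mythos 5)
Your proposal is correct and follows essentially the same route as the paper's own proof: the paper also defines the subgroup generated by powers of $\d_k$-commutators of $q'$-order, lifts $\d_k$-commutators to the quotient to see that they become $q$-elements, applies Theorem~\ref{prime} with $\pi=\{q\}$ to get solubility of the quotient, and derives a contradiction with the minimal nonsolubility of $G$. Your write-up is in fact slightly more careful than the paper's on the lifting step and on the normality of the generated subgroup, but there is no substantive difference.
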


\begin{proof}
Let $T$ be the subgroup of $G$ generated by all powers of $\d_k$-commutators of $q'$-order. We want to prove that $T=G$. Suppose that $T < G$. Then $T$ is soluble and, being normal, we can consider the quotient $G/T$. If $gT$ is a $\d_k$-commutator of $G/T$, we have $|gT|=q^{\alpha}m$, where $q$ does not divide $m$. Hence $(g)^{q^{\alpha}} \in T$, and so every $\d_k$-commutator of $G/T$ has $q$-order. Therefore, by Theorem \ref{prime}, $G/T$ is soluble. In particular $G$ is soluble, a contradiction.
\end{proof}

In addition, we will need the following fact, already stressed in \cite{pavel}, about elements of order $2$ in minimal simple groups. Recall that an element $g$ of a group $G$ is strongly real if there exists an element $x\in G$ of order $2$ such that $g^x=g^{-1}$.

\begin{proposition}\label{invstar}
In a minimal simple group all elements of order $2$ are $\d_n$-commutators,  for every $n \geq 0$. 

\end{proposition}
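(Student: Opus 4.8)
The plan is to argue by induction on $n$, proving simultaneously two statements: (A) every involution of $G$ is a $\d_n$-commutator, and (B) every element of a suitably chosen torus $T$ of $G$ is a power of a $\d_n$-commutator. Here $T$ is a cyclic subgroup of odd order which occurs in a subgroup $H=A\rtimes T$ as in Proposition~\ref{maj} (so $A$ is an elementary abelian $2$-group with $C_A(T)=1$) and which is additionally inverted by an involution $w_0\in G$, i.e. $t^{w_0}=t^{-1}$ for all $t\in T$. The base case $n=0$ is immediate: every element of $G$ is a $\d_0$-commutator, so (A) holds and each $t\in T$ is trivially a power (namely the first power) of a $\d_0$-commutator, giving (B). Since the set of $\d_n$-commutators is closed under conjugation and all involutions of a minimal simple group lie in a single conjugacy class, in the inductive step for (A) it suffices to exhibit one convenient involution $g_0\in A$ as a $\d_n$-commutator.

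For the inductive step for (A), I would exploit coprime action. As $(|A|,|T|)=1$ and $C_A(T)=1$, the module $A$ is completely reducible over $\mathbb{F}_2T$; choosing a nontrivial irreducible submodule $V\le A$ and an element $t\in T$ acting nontrivially on $V$, the endomorphism ring of $V$ is a field of characteristic $2$ on which $t$ acts as a scalar $\lambda\neq 1$, so $1+t$ is invertible on $V$. Fixing any nonzero $g_0\in V$ and setting $w=(1+t)^{-1}g_0\in V$, a direct computation in the abelian group $A$ gives $[w,t]=w^{-1}w^{t}=(1+t)w=g_0$, with $w$ an involution. By the inductive hypothesis (A)$_{n-1}$ the involution $w$ is a $\d_{n-1}$-commutator, and by (B)$_{n-1}$ the element $t$ is a power of a $\d_{n-1}$-commutator; since $|w|=2$ and $|t|$ is odd their orders are coprime, so $g_0=[w,t]$ is a $\d_n$-commutator.

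For the inductive step for (B), let $t\in T$ and let $w_0$ be the involution inverting $T$. Then
\[
[w_0,t]=w_0^{-1}t^{-1}w_0\,t=(w_0\,t^{-1}w_0)\,t=t\cdot t=t^2 .
\]
By (A)$_{n-1}$ the involution $w_0$ is a $\d_{n-1}$-commutator, by (B)$_{n-1}$ the element $t$ is a power of a $\d_{n-1}$-commutator, and again $|w_0|=2$ while $|t|$ is odd, so $t^2=[w_0,t]$ is a $\d_n$-commutator. Because $t$ has odd order, $t$ is a power of $t^2$, hence a power of a $\d_n$-commutator, which proves (B)$_n$ and closes the induction.

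The main obstacle is purely the group-theoretic verification, carried out along Thompson's list in Theorem~\ref{thomClass}, that each minimal simple group $G$ supplies the required data: a single conjugacy class of involutions, and a subgroup $H=A\rtimes T$ as above in which the odd-order torus $T$ both acts fixed-point-freely on the elementary abelian $2$-group $A$ and is inverted by an involution. For the families $\Psl(2,2^p)$, $\Psl(2,3^p)$ and $\Psl(2,p)$ this follows from the well-known facts that these groups are strongly real and that a split maximal torus is inverted by the Weyl reflection; for $\Sz(2^p)$ one takes $A$ to be the (elementary abelian) center of a Sylow $2$-subgroup and $T$ the split torus of order $q-1$, which acts fixed-point-freely on $A$ and is inverted by the involution of $N_G(T)$; and $\Psl(3,3)$ is settled by direct inspection. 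Once these ingredients are confirmed in each case, the inductive scheme above yields that every element of order $2$ is a $\d_n$-commutator for all $n\ge 0$.
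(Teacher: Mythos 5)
Your inductive engine is sound, and it is in fact a clean variant of the paper's argument: where the paper iterates commutators and invokes Lemma~\ref{uuu} to climb the $\d$-series, you use the definition of a $\d_n$-commutator directly (a commutator of two powers of $\d_{n-1}$-commutators of coprime orders), which is exactly why tracking statement (B) about \emph{powers} works. The gap is in the case-by-case verification, and it is fatal for case (2) of Theorem~\ref{thomClass}, the family $\Psl(2,3^p)$ with $p$ an odd prime. Your blanket claim that these groups are strongly real is false: since $p$ is odd, $3^p\equiv 3 \pmod 4$, so $-1$ is a non-square in $\mathbb{F}_{3^p}$, and an element $u$ of order $3$ (necessarily unipotent, as $3$ is the characteristic) is not conjugate to $u^{-1}$ at all --- the two unipotent classes of $\Psl(2,q)$ are distinguished by the square class of the upper entry, and $u^{-1}$ falls in the other class. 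In particular no involution of $G$ inverts an element of order $3$. This destroys your required data: in $\Psl(2,3^p)$ a Sylow $2$-subgroup is a Klein four-group whose normalizer is an $A_4$ (note $3^p\equiv 3\pmod 8$, so there is no $S_4$), hence the only nontrivial pairs $(A,T)$ with $A$ elementary abelian of exponent $2$ and $T$ of odd order acting fixed-point-freely are $A$ a four-group and $T$ one of the $C_3$'s inside that $A_4$ --- and no $w_0$ inverting $T$ exists. Your split-torus remark does not rescue this: the split torus of order $(3^p-1)/2$ is indeed inverted by the Weyl involution, but it normalizes no nontrivial $2$-subgroup; it acts fixed-point-freely only on the unipotent $3$-group of the Borel.

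This is precisely why the paper singles out $\Psl(2,3^p)$ and uses a chain one level longer: it keeps $T=C_3$ (not strongly real), finds a strongly real element $u$ of prime order dividing $(3^p-1)/2$ or $(3^p+1)/2$ which normalizes $T$ with $C_T(u)=1$, and takes an involution $j$ inverting $u$; the induction then runs over three families at once ($A$, $T$, and $K=\langle u\rangle$). Your scheme can be repaired the same way: add a third statement (C)$_n$, ``every element of $K$ is a power of a $\d_n$-commutator,'' obtain it from $[j,u]=u^2$ exactly as in your step (B), then obtain the elements of $T$ as single commutators $[t_1,u]$ with $t_1\in T$ (using the coprime fixed-point-free action of $K$ on $T$, which gives $T=[T,u]=\{[t,u]: t\in T\}$ since $T$ is abelian), and only then produce the involution $g_0=[w,t]$ as you do. Without this extra level your proof covers cases (1), (3), (4), (5) but not case (2), so as written it is incomplete.
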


\begin{proof}
Let $G$ be a minimal simple group. It is well-known that all all elements of order $2$ are conjugate in $G$. By Theorem~\ref{thomClass} we have to analyze five cases.\\

Assume that $G=\Psl(2,3^p)$. By Proposition~\ref{maj}, $G$ contains the alternating group of degree $4$ with Frobenius complement $T$ and Frobenius kernel $A$. Then $T$ is normalized by a strongly real element $u$ of prime order dividing either $(3^p-1)/2$ or $(3^p+1)/2$. If $K$ is the subgroup generated by $u$, then $T \rtimes K$ and $C_T(K)=1$. Let $x$ be a generator of $T$. Then
\[x=[x_1, \ _n u],\]
for some $x_1 \in T$ and for every $n \geq 1$. Assume that $j$ is an element of $G$ of order $2$ such that $u^j=u^{-1}$. Hence, if $y \in K$ we have
\[y=[y_1, \ _n j],\]
 for some $y_1 \in K$ and for every $n \geq 1$. The same happens for $A$, that is, when $a \in A$ there exists $a_1 \in A$ such that
\[a=[a_1, \ _n x],\] 
for any $n \geq 1$. We prove that $a,x,y$  are  $\d_n$-commutators for every $n \geq 0$. Indeed, since the case $n=0$ is obvious, suppose that $n >0$ and assume that $a, x, y$ are $\d_{n-1}$-commutators. By Lemma \ref{uuu}, $x$ is a $\d_n$-commutator because $x=[x_1, \ _n u]$. Hence, applying again Lemma \ref{uuu}, $a$ is a $\d_{n+1}$-commutator and so are all elements of order $2$ of $G$. Finally, Lemma~\ref{uuu} implies that $y$ is a  $\d_n$-commutator.  \\

Now assume that $G$ is not isomorphic to $\Psl(2,3^p)$. By Proposition~\ref{maj} $G$ contains a subgroup $H=A \rtimes T$ with $A$ elementary abelian $2$-group and $T$ a group of odd order such that $C_A(T)=1$. We can assume $T$ cyclic. If $T$ contains a strongly real element, then the elements of $T$ and all the elements of order $2$ are $\d_n$-commutators for every $n \geq 0$. Assume that this is the case and let $T= \langle x \rangle$ where $x$ is a strongly real element such that $C_A(x)=1$. Since $x$ is strongly real, there exists $j \in G$ of order $2$ such that $x^j=x^{-1}$. Hence $T=[T, j]$ and for every $y\in T$ there exists $y_1 \in T$ such that $y=[y_1, \ _n j]$. Then, for every $n \geq 1$ and every involution $a \in A$ we have
\[a=[b, \ _{n-1}x]\]
for some $b \in A$. We show that $a$ and $x$ are $\d_n$-commutators for every $n \geq 0$. Indeed, suppose that $n \geq 1$ and $x$ is a $\d_{n-1}$-commutator, being clear the case $n=0$. Since $a=[b, \ _{n-1} x]$, Lemma \ref{uuu} implies that $a$ is a $\d_n$-commutator. Since all elements of order $2$ in $G$ are conjugate,  they are $\d_n$-commutators. Therefore, applying again Lemma \ref{uuu} it follows that $x$ is a $\d_{n+1}$-commutator.\\

Let $G$ be isomorphic to the Suzuki group $\Sz(q)$ with $q=2^p$ and $p$ an odd prime. From \cite[Theorem 9]{suzuki2}, $G$ contains a Frobenius group $H= A \rtimes T$ of order $q^2(q-1)$, where A is a Sylow $2$-subgroup of $G$ and $T$ is a cyclic subgroup of order $q-1$ normalizing $A$. Let $x$ be a generator of $T$. By \cite[Proposition 3]{suzuki2}, the normalizer of $T$ in $G$ is a dihedral group $T \rtimes K$ of order $2(q-1)$, and a generator $j \in K$ is such that $x^j=x^{-1}$. \\ 

Let $G$ be isomorphic to $\Psl(2,2^p)$, with $p$ any prime. By \cite[Theorem 6.25]{suzuki} $G$ contains a subgroup $H=Q \rtimes L$ which is the semidirect product of a Sylow $2$-subgroup $Q$ order $2^p$  and a cyclic group $T$ of order $2^p-1$. Moreover, the normalizer of $T$ in $G$ is dihedral of order $2(2^p-1)$, and $G$ contains strongly real elements of order dividing $2^p-1$ and $2^p+1$.\\

If $G$ is isomorphic to $\Psl(3,3)$, then $G$ contains a copy of the alternating group of degree $4$, with cyclic Frobenius complement $T= \langle x \rangle$, and $x$ strongly real. \\

Finally, if $G$ is isomorphic to $\Psl(2,p)$, with $p$ odd prime, then $G$ contains a copy of the alternating group  of degree $4$, with cyclic Frobenius complement $T$ of order $3$. A generator of $T$ is strongly real. The proof is concluded.
\end{proof}

Now we are ready to prove our second main result.

\begin{proof}[\bf Proof of Theorem B]
Clearly if $\d_k(G)$ is nilpotent, then $|ab| = |a| |b|$ for any power of $\d_k$-commutators $a,b \in G$ of coprime orders. So we only need to prove the converse. 

Suppose that the theorem is false and let $G$ be a counterexample of minimal order. In view of Proposition \ref{main3}, $G$ is not soluble while all proper subgroups of $G$ so are. In fact, if $H < G$ then $\d_k(H)$ is nilpotent, and so $\delta_{k+1}^*(H)=1$. Thus, $H$ is soluble by Theorem \ref{fittingdelta}. It follows that $G = G'$. 

If $R$ is the soluble radical of $G$, then $G/R$ is a nonabelian simple group. Now we want to prove that $G/ Z(G)$ is simple. We will do it proving that $R= Z(G)$, which is obvious if $R=1$. Suppose $R \neq 1$ and choose $q \in \pi (F(G))$. According to Lemma \ref{genDk} $G$ is generated by power of $\d_k$-commutators of $q'$-order. Let $Q$ be the Sylow $q$-subgroup of $F(G)$. By Lemma \ref{commfond} $[Q, x] = 1$, for every power of $\d_k$-commutator $x$ of $q'$-order. Therefore $Q \leq Z(G)$, and this happens for each choice of $q \in \pi(F(G))$. Hence, we have $F(G) = Z(G)$.

Now suppose that $R$ is not nilpotent,  that is $\gamma_{\infty}(R) \neq 1$. Since $R$ is soluble, the lower Fitting series of $R$, 
\[
R=N_0(R) \geq N_1(R) \geq \cdots \geq N_t(R)=1,
\]
has length $t  \geq 2$. It follows that $N_{t-1}(R)$ is a characteristic nilpotent subgroup of $R$, and so $N_{t-1}(R) \leq F(G)=Z(G)$. Therefore $N_{t-2}(R)$ is nilpotent and $N_{t-1}(R)=\gamma_{\infty}(N_{t-2}(R))=1$, a contradiction. Thus $R$ is nilpotent and $R\leq F(G) = Z(G)$. Therefore $R=Z(G)$ and $G$ is quasisimple.

We claim that $G$ contains a power of $\d_k$-commutator $a$ of $2$-order such that $a$ has order $2$ modulo $Z(G)$. Since $G/Z(G)$ is a minimal simple group, by Lemma \ref{invstar}, there exists a $\d_k$-commutator $gZ(G)$ of order $2$. Then there exists a $\d_k$-commutator $y$ in $G$ such that $gZ(G)=yZ(G)$. Hence, we can choose $a$ to be a power of $y$ such that $a$ is a $2$-element and $a$ has order 2 modulo $Z(G)$.

Let $a$ be an element of $G$ with the above properties. Since $G/Z(G)$ is a nonabelian simple group, it follows from \cite[Theorem 3.8.2]{go} that there exists an element $t\in G$ such that the order of $[a,t]$ is odd. On the one hand, since $1=[a^2,t]=[a,t]^a[a,t]$, $a$ inverts $[a,t]$. On the other hand, by Lemma \ref{commfond}, $a$ centralizes $[a,t]$. This is a contradiction.
\end{proof}

Finally, the following result is an immediate consequence of Theorem~B.

\begin {corollary}
Let $G$ be a finite group and let $k \geq 1$. Assume that $|ab|=|a| |b|$ whenever $a$ and $b$ are powers of $\d_k$-commutators such that $(|a|,|b|)=1$. Then $\d_k(G)$ is nilpotent and the Fitting height of $G$ is at most $k+1$.
\end {corollary}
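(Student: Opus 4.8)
The plan is to deduce both assertions directly from Theorem~B together with the dictionary between $\d_k$-commutators and the lower Fitting series recorded in the Preliminaries. First I would dispose of the nilpotency of $\d_k(G)$. For $k\geq 2$ this is precisely the content of Theorem~B, so nothing new is needed. The endpoint $k=1$ is not literally covered by Theorem~B, but it follows from the earlier analysis: since $\d_1$-commutators coincide with $\g_2$-commutators and $\g_2(G)=\gi(G)$ by Proposition~\ref{gammainfk}, and since the hypothesis in particular applies to the $\d_1$-commutators themselves (which are among the powers of $\d_1$-commutators), Corollary~\ref{corollarioE} yields that $\d_1(G)=\gi(G)$ is nilpotent.

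Once the nilpotency of $\d_k(G)$ is in hand, the Fitting height bound is immediate. By Lemma~\ref{deltagen} we have $\d_k(G)=N_k(G)$, the $k$th term of the lower Fitting series of $G$. Because $\d_k(G)$ is nilpotent, its nilpotent residual is trivial, so
\[
\d_{k+1}(G)=N_{k+1}(G)=\gi(N_k(G))=\gi(\d_k(G))=1.
\]
Then Theorem~\ref{fittingdelta} gives that $G$ is soluble with Fitting height at most $k+1$, which is exactly the remaining conclusion.

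There is essentially no serious obstacle here; the statement is a formal corollary of material already established. The only point requiring a little care is the boundary value $k=1$, where Theorem~B does not apply and one must fall back on Corollary~\ref{corollarioE} (equivalently, on Theorem~A through the identification $\d_1=\g_2$). Beyond that subtlety, the argument is just the combination of Lemma~\ref{deltagen} and Theorem~\ref{fittingdelta}, so I would expect the write-up to be a single short paragraph.
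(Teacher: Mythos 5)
Your proof is correct and fills in exactly what the paper intends: the paper states this corollary only as an ``immediate consequence of Theorem~B,'' and the intended details are precisely your combination of Lemma~\ref{deltagen} (giving $\d_{k+1}(G)=\gi(\d_k(G))=1$ once $\d_k(G)$ is nilpotent) with Theorem~\ref{fittingdelta}. Your treatment of the boundary case $k=1$ is in fact more careful than the paper itself, which overlooks that Theorem~B is stated only for $k\geq 2$ while the corollary claims $k\geq 1$; as you note, that case genuinely requires falling back on Corollary~\ref{corollarioE} together with the identification $\d_1=\g_2$ and Proposition~\ref{gammainfk}, so this extra paragraph is a worthwhile addition rather than a redundancy.
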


\section{Acknowledgment}
 The authors wish to thank professor Pavel Shumyatsky for interesting discussions. The work of the first author was partially supported by FAPDF/Brazil, while the second author was supported by the ``National Group for Algebraic and Geometric Structures, and their Applications" (GNSAGA - INdAM). Moreover, this study was carried out during the second author's visit to the University of Brasilia. He wishes to thank the Department of Mathematics for the excellent hospitality.

\end{document}